\theoremstyle{definition}
\newtheorem{Def}{Definition}[section]
\newtheorem{es}[Def]{Example}
\theoremstyle{remark}
\newtheorem{obs}[Def]{Remark}
\newtheorem{nota}[Def]{Notation}
\theoremstyle{plain}
\newtheorem{prop}[Def]{Proposition}
\newtheorem{lema}[Def]{Lemma}
\newtheorem{cor}[Def]{Corollary}
\newtheorem{teo}[Def]{Theorem}
\newtheorem*{theo:char}{Theorem~\ref{psi-duality}}
\newcommand{\bo}{\mathbf}
\newcommand{\A}{{\mathcal A}}
\newcommand{\B}{{\mathcal B}}
\newcommand{\C}{{\mathcal C}}
\newcommand{\D}{{\mathcal D}}
\newcommand{\E}{{\mathcal E}}
\newcommand{\F}{{\mathcal F}}
\newcommand{\I}{{\mathcal I}}
\newcommand{\K}{{\mathcal K}}
\renewcommand{\P}{{\mathcal P}}
\newcommand{\Q}{{\mathcal Q}}
\newcommand{\V}{{\mathcal V}}
\newcommand{\W}{{\mathcal W}}
\newcommand{\tx}{\textnormal}
\newcommand{\op}{^\textnormal{op}}
\newcommand*\cocolon{%
	\nobreak
	\mskip6mu plus1mu
	\mathpunct{}%
	\nonscript
	\mkern-\thinmuskip
	{:}%
	\mskip2mu
	\relax
}
\newcommand{\changeoperator}[1]{%
	\csletcs{#1@saved}{#1@}%
	\csdef{#1@}{\changed@operator{#1}}%
}
\newcommand{\changed@operator}[1]{%
	\mathop{%
		\mathchoice{\textstyle\csuse{#1@saved}}
		{\csuse{#1@saved}}
		{\csuse{#1@saved}}
		{\csuse{#1@saved}}%
	}%
}
\def\@tocline#1#2#3#4#5#6#7{\relax
	\ifnum #1>\c@tocdepth 
	\else
	\par \addpenalty\@secpenalty\addvspace{#2}%
	\begingroup \hyphenpenalty\@M
	\@ifempty{#4}{%
		\@tempdima\csname r@tocindent\number#1\endcsname\relax
	}{%
		\@tempdima#4\relax
	}%
	\parindent\z@ \leftskip#3\relax \advance\leftskip\@tempdima\relax
	\rightskip\@pnumwidth plus4em \parfillskip-\@pnumwidth
	#5\leavevmode\hskip-\@tempdima
	\ifcase #1
	\or\or \hskip 1em \or \hskip 2em \else \hskip 3em \fi%
	#6\nobreak\relax
	\hfill\hbox to\@pnumwidth{\@tocpagenum{#7}}\par
	\nobreak
	\endgroup
	\fi}
\title[Dualities in the theory of accessible categories]{Dualities in the theory of accessible categories}
\author{Giacomo Tendas}
\address{Department of Mathematics, University of Manchester, Alan Turing Building, Manchester, M13 9PL, UK}
\email{giacomo.tendas@manchester.ac.uk}
\date{\today}
\begin{document}

\maketitle

\begin{abstract}
	 Through the notion of {\em weakly sound} class of weights, we recover many known dualities involving accessible categories with a chosen class of limits, as instances of a general duality theorem. These include the Gabriel--Ulmer duality for locally finitely presentable categories, Diers duality for locally finitely multipresentable categories, and the Makkai--Par\'e duality for finitely accessible categories. In doing so, we extend these to the enriched setting, provide a more formal and unifying approach to the theory, and also discuss new dualities that arise as a consequence of our main theorem.
\end{abstract}

\tableofcontents

\section{Introduction}

A small category with finite limits can bee seen as a {\em theory}, whose models are the finite-limit preserving functors into $\bo{Set}$. These, from the model theoretic point of view, contain the algebraic and Horn theories, and are actually equivalent to the more general essentially algebraic theories \cite{coste}. Gabriel and Ulmer characterized the categories of models of these limit theories and called them {\em locally finitely presentable categories} \cite{GU71:libro}. 

This led to a duality between the 2-category of finite limit theories, on the one hand, and that of the locally finitely presentable categories on the other. These can equivalently be presented as those categories which admit all small limits and that arise as the free cocompletion of a small category under filtered colimits. A category satisfying the second condition is called {\em finitely accessible}. 

There are now multiple directions that one might take to generalize this duality. A first possibility is to replace finite limits with another class of limits $\Phi$, and then consider those complete categories which are freely generated under $\Phi$-filtered colimits (that is, those colimits that commute in $\bo{Set}$ with $\Phi$-limits). Of course some hypotheses on the class $\Phi$ are required. This approach was taken for instance in \cite{adamek2003duality} for the class of finite products, in \cite{centazzo2002duality} for any {\em (weakly) sound} class of limits $\Phi$ as in \cite{ABLR02:articolo}, and more generally in the 2-categorical context through the notion of GU envelope \cite{di2023accessibility}.

In this paper we shall follow a different direction which arises when, instead of replacing finite limits with another class, we relax the completeness condition on a locally finitely presentable category; so that we deal just with finitely accessible categories with a given class of limits. Also in this setting some dualities have been considered in the literature.

In the absence of any limit, Makkai and Par\'e characterized the 2-category determined by the finitely accessible categories as biequivalent to the dual of the 2-category of presheaf categories, lex left-adjoint functors, and natural transformations \cite{MP89:libro}. Similarly, Diers proved a duality in \cite{Die80:articolo} between the 2-category of locally finitely multipresentable categories (that is, finitely accessible with connected limits) and that of the finitely complete categories which are free cocompletions of a small category under coproducts. In each of these cases the duality theorem is induced by homming into $\bo{Set}$ from the ambient 2-category taken into consideration.

{\em Working with categories enriched over a locally presentable base $\V$, we capture all these dualities into a general framework by using the notion of weakly sound class of weights~\cite{ABLR02:articolo}. This will allow us to generalize most of the known dualities to the enriched context and at the same time to explore new cases not considered in the literature before.}

The {\em weak soundness} condition for a class of weights $\Psi$ (Definition~\ref{wsclass}) says, in a few words, that colimits weighted by any $\Psi$-continuous weights commute in $\V$ with $\Psi$-limits; a weight satisfying this property is called {\em $\Psi$-flat}. The prototype example of such a class is that of finite limits; this is why soundness was originally considered in~\cite{ABLR02:articolo}. There are of course many other examples, which are discussed below and, more in detail, in Example~\ref{examplewsc}. Unlike in the papers~\cite{ABLR02:articolo,LT22:virtual}, we do not assume $\Psi$ to be locally small and we do not consider the corresponding notion of {\em $\Psi$-accessibility}. Instead, we use $\Psi$ to indicate a class of limits that our accessible $\V$-categories are assumed to have, as it was done in~\cite{LT22:limits}.

Then, given a weakly sound class $\Psi$ and a regular cardinal $\alpha$, we prove the following theorem:

\begin{theo:char}
	The 2-adjunction 
	\begin{center}
		
		\begin{tikzpicture}[baseline=(current  bounding  box.south), scale=2]

			\node (f) at (0,0.4) {$\alpha\tx{-Acc}_\Psi(-,\V)\colon \alpha\tx{-}\bo{Acc}_\Psi$};
			\node (g) at (2.7,0.4) {$\Psi^+\tx{-}\bo{Free}_\alpha\op\cocolon \Psi^+\tx{-Ex}_\alpha(-,\V)$};
			
			\path[font=\scriptsize]

			([yshift=1.3pt]f.east) edge [->] node [above] {} ([yshift=1.3pt]g.west)
			([yshift=-1.3pt]f.east) edge [<-] node [below] {} ([yshift=-1.3pt]g.west);
		\end{tikzpicture}
		
	\end{center}
	is a biequivalence of 2-categories.
\end{theo:char}

Here, $\alpha\tx{-}\bo{Acc}_\Psi$ is the 2-category of $\alpha$-accessible $\V$-categories with $\Psi$-limits, $\Psi$-continuous and $\alpha$-flat colimit preserving $\V$-functors, and $\V$-natural transformations. On the other hand, ${\Psi^+}\tx{-} \bo{Free}_\alpha$ is defined as the 2-category whose objects are the $\alpha$-complete $\V$-categories that are free cocompletions of a small $\V$-category under $\Psi$-flat colimits, whose morphisms are the $\alpha$-continuous $\V$-functors that also preserve $\Psi$-flat colimits, and whose 2-cells are $\V$-natural transformations. All these notions will be introduced and explained in Section~\ref{duality1}.

When $\Psi=\P$ is the class of all small weights, then ${\Psi^+}$ is the class $\Q$ of the Cauchy (or absolute) weights, and we recover the Gabriel-Ulmer duality between locally $\alpha$-presentable categories and the small $\alpha$-complete ones, which in the enriched context was first proved by Kelly \cite{Kel82:articolo}. When $\Psi=\emptyset$ we obtain a duality between $\alpha$-accessible $\V$-categories and presheaves $\V$-categories, generalizing the result of \cite{MP89:libro} to the enriched context. If $\V=\bo{Set}$ and $\Psi$ is the class for connected limits, we recover Diers duality for locally finitely multipresentable categories \cite{Die80:articolo}. 

Finally, if $\Psi=\P_\alpha$ is the class of weights for $\alpha$-small limits, then the theorem translates into a bi-involution
$$ \alpha\tx{-}\bo{Acc}_\alpha \simeq (\alpha\tx{-}\bo{Acc}_\alpha)\op $$
of the 2-category of $\alpha$-accessible $\V$-categorias with $\alpha$-small limits (Corollary~\ref{bi-inv}). All these examples, and more, are discussed in Section~\ref{examples-duality}.

A more general duality theorem can be achieved using the theory of {\em companions} developed in~\cite{LT22:limits}; this is done in Chapter 6 of the author's PhD thesis~\cite{Ten22:phd}. In that more general framework one can also capture Hu's duality between the 2-category of {\em weakly locally finitely presentable} categories (that is, finitely accessible categories with products) and that of exact categories with enough projectives~\cite{Hu1992:book}; as well as Hu and Tholen's duality between the 2-category of {\em locally finitely polypresentable} categories (that is, finitely accessible categories with wide pullbacks) and that of finitely complete {\em quasi-free} categories~\cite{HT96qc:articolo}. The approach with companions, however, would make the theory much more technical (and triple the number of pages needed to prove it); therefore, since Theorem~\ref{psi-duality} is already quite general, it seemed best to treat the only the weakly sound case in this paper and redirect the reader to~\cite{Ten22:phd} for the more general setting.  

\noindent 
{\bf Outline.} In Section~\ref{gen-sett} we give the relevant background notions on enrichment, and we recall some results on free cocompletions, soundness, and accessibility. Next, in Section~\ref{duality1}, we introduce the 2-categories and the 2-functors involved in the duality theorem and then prove the main result in Theorem~\ref{psi-duality}.

Section~\ref{examples-duality} is devoted to examples; here we select specific weakly sound classes of weights and explain how the duality theorem can specialize to them. Finally, in Section~\ref{Scott-V} we extend the duality involving the 2-category of finitely accessible $\V$-categories, proven as Theorem~\ref{finacc-presh}, to a 2-adjunction between the 2-category of accessible $\V$-categories with filtered colimits and that of $\V$-topoi. 

\noindent
{\bf Acknowledgements.} I would like to thank Steve Lack for the many helpful comments and suggestions on a preliminary version of this paper, which ended up being a chapter of my PhD thesis~\cite{Ten22:phd}. Thanks also to Ivan Di Liberti for suggesting to generalize the Scott adjunction to the enriched context, and to the anonymous referee for making suggestions that improved the readability of the paper. Finally, I acknowledge with gratitude the support of the EPSRC, under the grant EP/X027139/1, and of the Grant Agency of the Czech Republic, under the grant 22-02964S.

\section{Our setting}\label{gen-sett}

\subsection{Background}$ $

Let us fix a base of enrichment $\V=(\V_0,\otimes,I)$ which is a complete, cocomplete, and symmetric monoidal closed category.

For matters concerning enrichment we follow the notations of \cite{Kel82:libro}, with the only change that ``indexed'' colimits are here called ``weighted'', as is now standard. Our $\V$-categories are allowed to have a large set of objects, unless specified otherwise.

Given a $\V$-category $\C$ and a $\V$-functor $F\colon\C\to\D$ we denote by $\C_0$ the underlying ordinary category of $\C$ and by $F_0\colon\C_0\to\D_0$ the underlying ordinary functor of $F$. This assignment extends to a 2-functor $(-)_0\colon\V$-$\bo{CAT}\to\bo{CAT}$ from the 2-category of $\V$-categories, $\V$-functors, and $\V$-natural transformations, to that of (ordinary) categories, functors, and natural transformations. 
This 2-functor has a left adjoint $(-)_\V\colon\bo{CAT}\to\V$-$\bo{CAT}$ that sends an ordinary category $\B$ to its {\em free $\V$-category} $\B_\V$; the universal property of the adjunction says that there is a natural isomorphism
$$ [\B_\V,\C]_0\cong [\B,\C_0] $$
for any $\V$-category $\C$.

We call {\em weight} a $\V$-functor $ M \colon \C\op\to\V$ with small domain. Given such a weight $ M $ and a $\V$-functor $H\colon \C\to\A$, we denote by $ M *H$ (if it exists) the colimit of $H$ weighted by $ M$. The universal property of such colimit is defined by isomorphisms
$$ \A(M*H,A)\cong [\C\op,\V](M,\A(H-,A)) $$
that are $\V$-natural in $A\in\A$.
Dually, given a weight $ N \colon \C\to\V$ and a $\V$-functor $K\colon \C\to\A$, the weighted limit of $K$ by $N$ is denoted by $\{ N ,K\}$. 

Conical limits and colimits are special cases of weighted ones; they coincide with those weighted by $\Delta I\colon \B_\V\op\to\V$ for some ordinary category $\B$. The conical colimit of a $\V$-functor $T_{\V}\colon \B_\V\to\A$, if it exists, will also be the ordinary colimit of the transpose $T\colon \B\to\A_0$ in $\A_0$ (but the converse is not generally true). 


For the remainder of the paper we assume in addition that $\V$ is {\em locally $\alpha$-presentable as a closed category}, meaning that it is locally $\alpha$-presentable and the full subcategory $\V_\alpha$ of the $\alpha$-presentable objects contains the unit and is closed under tensor product \cite{Kel82:articolo}. With this assumption the theory of accessibility extends to the enriched context \cite{BQ96:articolo}; see Section~\ref{accessible} below for more details.

\subsection{Free cocompletions and soundness}\label{free-sound-section}$ $

The enriched notion of free cocompletion under a class of weights $\Phi$ has been studied in \cite{KS05:articolo}; here we recall the definitions and some of the main properties.

Given a class $\Phi$ and a possibly large $\V$-category $\C$, the free cocompletion $\Phi(\C)$ of $\C$ can be constructed as the closure of the representables in $[\C\op,\V]$ under $\Phi$-colimits \cite[Section~3]{KS05:articolo}. While $[\C\op,\V]$ is not in general $\V$-enriched (but only $\V'$-enriched for some universe enlargement $\V'$ of $\V$), the resulting $\Phi(\C)$ is always a $\V$-category. 

By definition, $\Phi(\C)$ comes together with a fully faithful inclusion $Z\colon\C\hookrightarrow\Phi(\C)$; this satisfies the following universal property which makes $\Phi(\C)$ the {\em free cocompletion of $\C$ under $\Phi$-colimits}: for any $\Phi$-cocomplete $\K$, pre-composition by $Z$ induces an equivalence 
$$ \Phi\tx{-Coct}(\Phi(\C),\K)\simeq[\C,\K]; $$
here $\Phi\tx{-Coct}(\Phi(\C),\K)$ denotes the full subcategory of $[\Phi(\C),\K]$ spanned by the $\Phi$-cocontinuous $\V$-functors. The inverse of the equivalence above is given by left Kan extending along $Z$ (see \cite[Proposition~3.6]{KS05:articolo}).

\begin{obs}
	Of particular importance are the free cocompletion under the class $\P$ of all weights and the class $\Q$ of the Cauchy weights (those whose colimits commute with all limits in $\V$). Following~\cite{DL07}, given a $\V$-category $\C$, an element $F\in\P(\C)$ is a $\V$-functor $F\colon\C\op\to\V$ which is a (small) colimit of representables. Equivalently, $F\in\P(\C)$ if and only if it can be expressed as $F\cong\tx{Lan}_HFH$ where $H\colon\C'\hookrightarrow\C$ is a small full subcategory of $\C$. We refer to the objects of $\P(\C)$ as the {\em small presheaves} on $\C$.
\end{obs}

Dually, there is a notion of {\em free completion under $\Phi$-limits} which satisfies the dual universal property (with respect to $\Phi$-complete $\V$-categories and $\Phi$-continuous $\V$-functors). Given $\C$, we denote such completion by $\Phi^\dagger(\C)$. It follows from the universal property that for any $\V$-category $\C$ one has $\Phi^\dagger(\C)\simeq\Phi(\C\op)\op$.

It will be useful in the following sections to be able to characterize intrinsically those $\V$-categories which arise as free cocompletions under $\Phi$-colimits, and this is what the proposition below does. Given a $\V$-category $\B$ with $\Phi$-colimits, we denote by $\B_\Phi$ the full subcategory of $\B$ spanned by those objects $B$ for which $\B(B,-)$ is $\Phi$-cocontinuous. Then:

\begin{prop}[\cite{KS05:articolo}]\label{free-Phi}
	A $\V$-category $\B$ is equivalent to $\Phi(\C)$, for some small $\V$-category $\C$, if and only if $\B$ is $\Phi$-cocomplete and there exists a small full subcategory $\D$ of $\B_\Phi$ whose closure in $\B$ under $\Phi$-colimits is $\B$ itself. Moreover, in this case $\B_\Phi\simeq\Q(\D)$, is the Cauchy completion of $\D$, and is therefore (essentially) small.
\end{prop}
\begin{proof}
	Put together Propositions~4.3 and 7.5 of \cite{KS05:articolo}.
\end{proof}

Next we recall the notion of (possibly large) weakly sound class of weights considered in \cite[Section~4.1]{LT22:limits}, which generalizes the original notion of~\cite{ABLR02:articolo}. 

Recall from~\cite{LT22:virtual} that a $\V$-category $\A$ is called {\em virtually cocomplete} if its free completion $\P^\dagger(\A)$ under all small limits is cocomplete. Equivalently, by~\cite[Theorem~3.8]{DL07} and using that $\P^\dagger(\A)\subseteq[\A,\V]\op$, $\A$ is virtually cocomplete if any small limit of representables in $[\A,\V]$ is a small $\V$-functor $\A\to\V$.

\begin{Def}[\cite{LT22:limits}]\label{wsclass}
	Let $\Psi$ be a class of weights. We say that a small presheaf $M\colon\A\op\to\V$ is {\em $\Psi$-flat} if $M$-colimits commute in $\V$ with $\Psi$-limits. We denote by $\Psi^+$ the class given by those $\Psi$-flat $\V$-functors with small domain.\\
	A class of weights $\Psi$ is called {\em weakly sound} if every $\Psi$-continuous and small $\V$-functor $ M \colon\A\to\V$, from a virtually cocomplete and $\Psi$-complete $\A$, is $\Psi$-flat.
\end{Def}

The virtual cocompleteness of $\A$ is asked since in \cite{LT22:limits} the condition above is applied to the case where $\A$ is an accessible $\V$-category (see Section~\ref{accessible} below) which is in particular virtually cocomplete (Proposition~\ref{virtual}). 

Here are some examples weakly sound classes of weights:

\begin{es}\label{examplewsc}
	In the following list we give
	examples of weakly sound classes $\Psi$; when meaningful, we explicitly state what $\Psi^+$ looks like. See \cite[Example~4.8]{LT22:limits} for details. In cases where
	$\V=\bo{Set}$ and we speak of a class of categories rather than a class of weights, we are referring to the corresponding conical limits (in other words, weighted by the terminal weight). 
	\begin{enumerate}[align=parleft,left=10pt]
		\item $\Psi=\P$ is the class of all small weights. Then $\Psi^+=\Q$ is the class of the Cauchy (or absolute) weights \cite{KS05:articolo}.
		\item $\Psi=\emptyset$. Then $\Psi^+=\P$ is the class of all weights. 
		\item $\V$ locally $\alpha$-presentable as a closed category, $\Psi$ is the class of $\alpha$-small weights (see Definition~\ref{alpha-small}). Then $\Psi^+$ consists of the $\alpha$-flat $\V$-functors (see Section~\ref{accessible} below). For $\V=\bo{Set}, \bo{Cat},\bo{Pos},\bo{SSet}$, or any other $\V$ as in \cite[Section~3]{LT21:articolo}, $\alpha$-flat colimits are the same as (conical) $\alpha$-filtered colimits. This is not true for a general $\V$.
		\item $\V$ symmetric monoidal closed finitary quasivariety as in \cite{LT20:articolo}, $\Psi$ is the class for finite products and finitely presentable projective powers. Examples of such $\V$ are the categories $\bo{Ab}$ of abelian groups, $\bo{GAb}$ of graded abelian groups, and $\bo{DGAb}$ of differentially graded abelian groups.
		\item $\V$ cartesian closed, $\Psi$ is the class of finite discrete diagrams. Then $\Psi^+$ consists of those $M$ for which $\tx{Lan}_\Delta M \cong M \times M $ (\cite[Lemma~2.3]{KL93FfKe:articolo}). Examples of such $\V$ are the categories $\bo{Pos}$ of partially ordered sets, $\bo{Cat}$ of small categories, and $\bo{SSet}$ of simplicial sets.
		\item $\V=\bo{Set}$, $\Psi$ is the class of connected categories. Then $\Psi^+$ is generated by the class of discrete categories.
		\item $\V=\bo{Set}$, $\Psi=\{\emptyset\}$. Then $\Psi^+$ is generated by the class of connected categories.
		\item $\V=\bo{Set}$, $\Psi$ consists of the finite connected categories. Then $\Psi^+$ is generated by coproducts of filtered categories.
		\item $\V=\bo{Set}$, $\Psi$ is the class of finite non empty categories. Then $\Psi^+$ is generated by the filtered categories plus the empty category.
		\item $\V=\bo{Set}$, $\Psi$ is the class of finite discrete non empty categories. Then $\Psi^+$ is generated by the sifted categories plus the empty category.
		\item $\V=\bo{Cat}$, $\Psi$ is the class generated by connected conical limits and powers by connected categories. Then every $\Psi$-flat weight is a split subobject of coproducts of representables.
	\end{enumerate}
\end{es}

Next we prove an easy generalization of \cite[Lemma~3.3]{LT22:limits} since it will be useful several times in our proofs. 

\begin{lema}\label{flat-restriction}
	Let $\Psi$ be a class of weights, $J\colon \B\to\C$ be a $\V$-functor with $\B$ a small $\V$-category, and $  M  \colon \B\to\V$ a weight. Then:\begin{enumerate}\setlength\itemsep{0.25em}
		\item if $  M  $ is $\Psi$-flat then so is $\tx{Lan}_{J}  M $;
		\item if $J$ is fully faithful and $\tx{Lan}_{J} M  $ is $\Psi$-flat then $ M $ is $\Psi$-flat as well;
	\end{enumerate}
\end{lema}
\begin{proof}
	By definition, a $\V$-functor $ M $ is $\Psi$-flat if $ M *-\colon [\B\op,\V]\to\V$ is $\Psi$-continuous. Note that the triangle below commutes up to isomorphism.
	\begin{center}
		\begin{tikzpicture}[baseline=(current  bounding  box.south), scale=2]
			
			\node (a) at (0.7,0.6) {$[\B\op,\V]$};
			\node (c) at (0, 0) {$[\C\op,\V]$};
			\node (d) at (1.6, 0) {$\V$};
			
			\path[font=\scriptsize]
			
			(c) edge [->] node [above] {$[J\op,\V]\ \ \ \ \ \ \ \ \ \ $} (a)
			(a) edge [->] node [above] {$\ \ \ \ \ \  M *-$} (d)
			(c) edge [->] node [below] {$(\tx{Lan}_{J}M)*-$} (d);
			
		\end{tikzpicture}	
	\end{center} 
	Indeed, we need to check that $(\tx{Lan}_{J}M)*F\cong  M *FJ\op$, $\V$-naturally in $F\colon\C\op\to\V$. This follows from \cite[4.19]{Kel82:libro} or simply by considering the following chain of isomorphisms
	\begin{align}
		[(\tx{Lan}_{J}M)*F,X]&\cong [\C,\V](\tx{Lan}_{J}M,[F-,X])\tag{def of $(\tx{Lan}_{J^{op}}M)*F$}\\
		 &\cong [\B,\V](M,[FJ\op-,X])\tag{$(\tx{Lan}_{J}(-)\dashv (-)\circ J$}\\
		 &\cong [M*FJ,X]\tag{def of $M*FJ$}
	\end{align}
	which are $\V$-natural in $X\in\V$.
	
	Note that, since $\C$ may not be small, above we are seeing $[\C\op,\V]$ as a $\V'$-category for some universe enlargement $\V'$ of $\V$ as in \cite[Section~3.12]{Kel82:libro}.
	
	As a consequence if $ M $ is $\Psi$-flat then so is $\tx{Lan}_{J} M$ since $[J\op,\V]$ is continuous; this proves (1).
	
	(2). If $J$ is fully faithful and $\tx{Lan}_{J}M $ is $\Psi$-flat, then 
	\begin{equation*}
		\begin{split}
			M *-&\cong ( M *-)\circ id_{[\B\op,\V]} \\
			&\cong ( M *-)\circ [J\op,\V]\circ \tx{Ran}_{J\op}\\
			&\cong (\tx{Lan}_{J}M*-)\circ \tx{Ran}_{J\op}\\
		\end{split}
	\end{equation*}
	where $id_{[\B\op,\V]}\cong[J\op,\V]\circ \tx{Ran}_{J\op}$ since $J$ is fully faithful, and $\tx{Ran}_{J\op}$ exists since the domain of $J$ is small. It follows that $ M *-$ is $\Psi$-continuous because $\tx{Lan}_{J}M*-$ is and $\tx{Ran}_{J\op}$ is continuous.
\end{proof}

\subsection{Accessible $\V$-categories}\label{accessible}$ $

Enriched accessible categories will be the main objects involved in the duality theorem that we aim to prove. While the concept in the ordinary context is very well studied and understood~\cite{MP89:libro,AR94:libro}, in the enriched setting they first appeared only at the end ot the 90s in~\cite{BQ96:articolo,BQR98}. Afterwards, they were considered again in \cite{LT21:articolo,LT22:virtual} where additional characterization theorems were proved, making the theory more tractable. In this section we recall their definition as well as some results that will be needed later.

The notions of $\alpha$-small and $\alpha$-flat weights, as well as those of limits and colimits weighted by them, are central when working with accessible $\V$-categories, so we recall their definitions below.

\begin{Def}[\cite{Kel82:articolo}]\label{alpha-small}
	A weight $ M \colon \C\op\to\V$ is {\em $\alpha$-small} if $\C$ has less than $\alpha$ objects, $\C(C,D)\in\V_{\alpha}$ for any $C,D\in\C$, and $ M (C)\in\V_\alpha$ for any $C\in\C$. An $\alpha$-small (weighted) limit is one taken along an $\alpha$-small weight. We say that a $\V$-category $\C$ is $\alpha$-complete if it has all $\alpha$-small limits; a $\V$-functor is $\alpha$-continuous if it preserves all $\alpha$-small limits. We denote by $\P_\alpha$ the class of $\alpha$-small weights.
\end{Def}

Both conical $\alpha$-small limits and powers by $\alpha$-presentable objects are examples of $\alpha$-small limits and together they generate all $\alpha$-small weighted limits \cite[Section~4]{Kel82:articolo}.

\begin{obs}\label{absolute-alpha}
	Note that every small $\alpha$-complete $\V$-category is Cauchy complete. Indeed, let $\C$ be small and $\alpha$-complete; then by the enriched Gabriel-Ulmer duality \cite{Kel82:articolo}, $\C\simeq\K_\alpha\op$ is equivalent to the opposite of the $\V$-category spanned by the $\alpha$-presentable objects of a locally $\alpha$-presentable $\K$, and this is Cauchy complete by \cite[Proposition~3.9]{LT22:virtual}.
\end{obs}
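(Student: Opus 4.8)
The plan is to reduce the containment to a Cauchy-completeness statement about free cocompletions, and then to settle that statement via the enriched Gabriel--Ulmer duality. First I would invoke the standard characterization of the saturation of a small class $\Phi$ of weights: a weight $\phi\colon\C\op\to\V$ lies in $\Phi^*$ if and only if it belongs to the closure of the representables inside $[\C\op,\V]$ under $\Phi$-colimits, that is, to the free $\Phi$-cocompletion of $\C$. Taking $\Phi$ to be the class of $\alpha$-small weights, write $\C_\alpha^+\hookrightarrow[\C\op,\V]$ for this free $\alpha$-cocompletion; it is a small $\V$-category (a closure of a small $\V$-category under a small class of colimits) and by construction it has all $\alpha$-small colimits. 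On the other hand the absolute weights on $\C$ are exactly the objects of the Cauchy completion $\overline{\C}$, realized as the closure of the representables in $[\C\op,\V]$ under absolute colimits. Since both closures contain the representables, the desired containment $\Q\subseteq(\alpha\tx{-small})^*$ amounts to the inclusion $\overline{\C}\subseteq\C_\alpha^+$ for every small $\C$.

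Next I would observe that this inclusion follows as soon as $\C_\alpha^+$ is itself Cauchy complete. Indeed, the inclusion $\C_\alpha^+\hookrightarrow[\C\op,\V]$ is a $\V$-functor and hence preserves all absolute colimits; so if $\C_\alpha^+$ is Cauchy complete it already contains every absolute colimit of representables, and these are precisely the objects of $\overline{\C}$. Thus the whole statement reduces to: the free $\alpha$-cocompletion of a small $\V$-category is Cauchy complete. Dualizing --- Cauchy completeness is self-dual, and $(\C_\alpha^+)\op$ is the free $\alpha$-completion of $\C\op$ --- this is the same as the assertion that every small $\alpha$-complete $\V$-category is Cauchy complete.

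Finally I would prove this last assertion. Given a small $\alpha$-complete $\V$-category $\D$, the enriched Gabriel--Ulmer duality \cite{Kel82:articolo} presents it as $\D\simeq\K_\alpha\op$ for a locally $\alpha$-presentable $\K$, with $\K_\alpha$ the full subcategory of $\alpha$-presentable objects. It then suffices to know that $\K_\alpha$ is Cauchy complete, which holds because absolute colimits of $\alpha$-presentable objects are again $\alpha$-presentable \cite[Proposition~3.9]{LT22:virtual}; since Cauchy completeness is self-dual, $\D\simeq\K_\alpha\op$ is Cauchy complete as well. The step I expect to be the main obstacle is precisely this last one, for a reason peculiar to the enriched setting: over a general base $\V$, Cauchy completeness is strictly stronger than the splitting of idempotents, so the naive argument that $\alpha$-small (indeed finite) limits split idempotents does not suffice. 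One genuinely needs that the $\alpha$-presentable objects are closed under \emph{all} absolute colimits, which is where the real content sits and why routing through Gabriel--Ulmer, rather than a direct manipulation of weights, is the natural approach.
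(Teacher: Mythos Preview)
Your proposal is correct and follows the same route as the paper: reduce the containment $\Q\subseteq(\alpha\text{-small})^*$ to the claim that every small $\alpha$-complete $\V$-category is Cauchy complete, then invoke enriched Gabriel--Ulmer duality together with \cite[Proposition~3.9]{LT22:virtual}. The only difference is that you spell out the reduction step --- via the characterization of saturations as free cocompletions and the fact that fully faithful inclusions preserve absolute colimits --- whereas the paper simply asserts ``for that it is enough to show\ldots'' and moves on.
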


\begin{Def}[\cite{Kel82:articolo}]
	A weight $ M \colon\C\op\to\V$ is $\alpha$-flat if it is $\P_\alpha$-flat (in the sense of Definition~\ref{wsclass}); that is, if $M$-weighted colimits commute in $\V$ with $\alpha$-small limits.  By {\em $\alpha$-flat colimits} we mean those colimits weighted by an $\alpha$-flat weight.
\end{Def}

Note that every conical $\alpha$-filtered colimit is $\alpha$-flat and that the $\alpha$-flat $\V$-functors are closed in $[\C\op,\V]$ under $\alpha$-flat colimits.

\begin{prop}[\cite{Kel82:articolo}]\label{flat-char}
	Let $ M \colon\C\op\to\V$ be a weight; the following are equivalent:\begin{enumerate}\setlength\itemsep{0.25em}
		\item $ M $ is $\alpha$-flat;
		\item $ M $ is an $\alpha$-flat colimit of representables.
	\end{enumerate}
	If $\C$ is $\alpha$-cocomplete they are further equivalent to:\begin{enumerate}\setlength\itemsep{0.25em}
		\item[(3)] $ M $ is $\alpha$-continuous;
		\item[(4)] $ M $ is a conical $\alpha$-filtered colimit of representables. 
	\end{enumerate}
\end{prop}

In general, when $\C$ is not $\alpha$-cocomplete, not every $\alpha$-flat weight can be expressed as an $\alpha$-filtered colimit of representables, see \cite{LT21:articolo}. 

\begin{obs}\label{Phi-accessible}
	It follows from Proposition~\ref{flat-char} above, that the class $\P_\alpha$ is weakly sound; this is what makes the theory of \cite{BQR98,LT22:virtual} work. In fact, in \cite[Section~3]{LT22:virtual}, it is considered a notion of $\Phi$-accessibility for a {\em locally small} and weakly sound class of weights $\Phi$, generalizing the work of~\cite{ABLR02:articolo}. (Local smallness of $\Phi$ here means that $\Phi(\C)$ is small whenever $\C$ is.)
	
	The notion of $\Phi$-accessibility could replace that of $\alpha$-accessibility everywhere in this paper (see Remark~\ref{moregeneral-sound}); we decided to keep the simpler notion of $\alpha$-accessibility since the readers will be more familiar with that. 
\end{obs}

\begin{Def}[\cite{BQR98}]
	Let $\alpha$ be a regular cardinal and $\A$ be a $\V$-category with $\alpha$-flat colimits.  An object $A\in\A$ is called {\em $\alpha$-presentable} if $\A(A,-)$ preserves $\alpha$-flat colimits. We denote by $\A_\alpha$ the full subcategory of $\A$ spanned by the $\alpha$-presentable objects. 
\end{Def}	

\begin{obs}
	Following the notation of Section~\ref{free-sound-section}, note that the $\V$-category $\A_\alpha$ coincides with $\A_{\P_\alpha^+}$. 
\end{obs}
	
\begin{Def}[\cite{BQR98}]
	We say that a $\V$-category $\A$ with $\alpha$-flat colimits is {\em $\alpha$-accessible} if there exists $\C\subseteq\A_\alpha$ small for which every element of $\A$ is an $\alpha$-flat colimit of objects from $\C$. We denote by $\alpha\tx{-}\bo{Acc}$ the 2-category of $\alpha$-accessible $\V$-categories, $\V$-functors preserving $\alpha$-flat colimits, and $\V$-natural transformations.
\end{Def}

Then $\alpha$-accessible $\V$-categories can be characterized as follows:

\begin{prop}\cite[Proposition~3.10]{LT22:virtual}\label{acc=flat}
	The following are equivalent for a $\V$-category $\A$: \begin{enumerate}\setlength\itemsep{0.25em}
		\item $\A$ is $\alpha$-accessible;
		\item $\A$ is the free cocompletion of a small $\V$-category $\C$ under $\alpha$-flat colimits;
		\item $\A\simeq\alpha\tx{-}\tx{Flat}(\C^{op},\V)$ for some small $\C$.
	\end{enumerate}
	In both $(2)$ and $(3)$ the $\V$-category $\C$ can be chosen to be $\A_\alpha$; moreover, if $\C$ is Cauchy complete, then $\alpha\tx{-}\tx{Flat}(\C^{op},\V)_\alpha\simeq \C$.
\end{prop}

Lastly, we shall need the following result. Recall that a $\V$-category is called {\em accessible} if it is $\alpha$-accessible for some $\alpha$.

\begin{prop}\cite{LT22:virtual}\label{virtual}
	Let $\A$ be an accessible $\V$-category; then $\P^\dagger(\A)=\P(\A\op)\op$ consists of those $\V$-functors $\A\to\V$ which preserve $\alpha$-flat colimits for some $\alpha$. In particular, $\P(\A\op)$ is complete and closed in $[\A,\V]$ under small limits, making $\A$ virtually cocomplete.
\end{prop}
\begin{proof}
	Put together Propositions~4.9 and 4.18 from~\cite{LT22:virtual}.
\end{proof}

\section{The duality}\label{duality1}

Now we are ready to define the 2-categories involved in the duality theorem.

\begin{Def}
	Let $\alpha\tx{-}\bo{Acc}_\Psi$ be the 2-category with objects the $\alpha$-accessible $\V$-categories with $\Psi$-limits, $\Psi$-continuous $\V$-functors which preserves $\alpha$-flat colimits as morphisms, and $\V$-natural transformations as 2-cells.
\end{Def}

On the other hand consider the following:

\begin{Def}
	We say that a $\V$-category $\B$ is {\em $\Psi^+$-free} if it is the free cocompletion of a small $\V$-category under $\Psi^+$-colimits. Let ${\Psi^+}\tx{-}\bo{Free}_\alpha$ be the 2-category of the $\alpha$-complete and $\Psi^+$-free $\V$-categories, $\alpha$-continuous and $\Psi^+$-cocontinuous $\V$-functors, and $\V$-natural transformations.
\end{Def}

By Proposition~\ref{free-Phi}, a $\V$-category with $\Psi^+$-colimits $\B$ is $\Psi^+$-free if and only if $\B_{\Psi^+}$ is essentially small and its closure in $\B$ under $\Psi^+$-colimits is $\B$ itself.

\begin{nota}\label{notation}
	Given $\A\in \alpha\tx{-}\bo{Acc}_\Psi$, we denote with $\alpha\tx{-}\bo{Acc}_\Psi(\A,\V)$ --- in bold --- the hom in the 2-category $\alpha\tx{-}\bo{Acc}_\Psi$. This is the (ordinary) category of $\alpha$-accessible and $\Psi$-continuous $\V$-functors $\A\to\V$, and $\V$-natural transformations between them. Instead, we denote by $\alpha\tx{-Acc}_\Psi(\A,\V)$ --- unbolded --- the $\V$-category obtained by taking the full subcategory of $[\A,\V]$ spanned by the $\alpha$-accessible and $\Psi$-continuous $\V$-functors (the fact that this is actually a $\V$-category, and not just a $\V'$-category for some universe enlargement $\V'\supseteq\V$, follows from Lemma~\ref{2-functor-2} below). Therefore we obtain an equality
	$$\alpha\tx{-Acc}_\Psi(\A,\V)_0=\alpha\tx{-}\bo{Acc}_\Psi(\A,\V).$$
	We use the same notational convention for $\B\in{\Psi^+}\tx{-}\bo{Free}_\alpha$, so that $\Psi^+\tx{-Free}_\alpha(\B,\V)$ denotes the $\V$-category obtained by taking the full subcategory of $[\B,\V]$ spanned by the $\alpha$-continuous and $\Psi^+$-cocontinuous $\V$-functors (again, this will actually be a $\V$-category by Lemma~\ref{2-functor-1}). And therefore we obtain an equality
	$$ \Psi^+\tx{-Free}_\alpha(\B,\V)_0={\Psi^+}\tx{-}\bo{Free}_\alpha(\B,\V) $$
	of ordinary categories. 
\end{nota}

Next we define the 2-functors which will form the duality theorem.

\begin{lema}\label{2-functor-2}
	The 2-functor
	$$ \alpha\tx{-Acc}_\Psi(-,\V)\colon\alpha\tx{-}\bo{Acc}_\Psi\longrightarrow \Psi^+\tx{-}\bo{Free}_\alpha\op$$%
	is well-defined. Moreover, for any $\A\in\alpha\tx{-}\bo{Acc}_\Psi$ we have an equivalence
	$$\alpha\tx{-Acc}_\Psi(\A,\V)\simeq \Psi^+(\A_\alpha\op),$$ where we see $\A_\alpha\op$ as a full subcategory of $\alpha\tx{-Acc}_\Psi(\A,\V)$ through the Yoneda embedding.
\end{lema}
\begin{proof}
	We first prove the second part of the statement. Let $\A$ be $\alpha$-accessible with $\Psi$-limits; then the domain and codomain restriction of the Yoneda embedding induces a fully faithful inclusion
	$$ J\colon\A_\alpha\op\hookrightarrow \alpha\tx{-Acc}_\Psi(\A,\V)$$
	since for each $A\in\A_\alpha$ the $\V$-functor $\A(A,-)$ preserves $\Psi$-limits and $\alpha$-flat colimits. Note also that $\alpha\tx{-Acc}_\Psi(\A,\V)$ is closed in $\P(\A\op)$ under $\Psi^+$-colimits and $\alpha$-small limits since these commute in $\V$ with $\Psi$-limits and $\alpha$-flat colimits, and all these limits and colimits are computed pointwise in $\P(\A\op)$ by Proposition~\ref{virtual}. As a consequence, for any $A\in\A_\alpha$ the hom $\V$-functor $\alpha\tx{-Acc}_\Psi(\A,\V)(JA,-)$ preserves $\Psi^+$-colimits (it being the composite of the inclusion $\alpha\tx{-Acc}_\Psi(\A,\V)\to\P(\A\op)$ and of the evaluation $\V$-functor at $A$).
	
	Then, given any $F\in\alpha\tx{-Acc}_\Psi(\A,\V)$, since $F$ preserves $\alpha$-flat colimits it is the left Kan extension of its restriction to $\A_\alpha$ (universal property of the free cocompletion); that is, $F\cong\tx{Lan}_H(FH)$ where $H\colon \A_\alpha\to\A$ is the inclusion. Note that $FH$ is $\Psi$-flat by Lemma~\ref{flat-restriction} since $F$ was (being $\Psi$-continuous).
	
	Now, let $K\colon \alpha\tx{-Acc}_\Psi(\A,\V)\hookrightarrow\P(\A\op)$ be the inclusion; then $KJ\cong YH$ where $Y\colon \A\op\hookrightarrow\P(\A\op)$ is the Yoneda embedding. It follows that
	$$KF\cong F*Y\cong \tx{Lan}_H(FH)*Y\cong FH*YH\cong FH*KJ,$$
	where the first isomorphism is given by \cite[3.17]{Kel82:libro} and the third by \cite[4.19]{Kel82:libro}. Since $K$ is fully faithful, this implies that:
	$$ F\cong FH*J.$$
	Thus every element of $\alpha\tx{-Acc}_\Psi(\A,\V)$ is a $\Psi^+$-colimit of elements from $\A_\alpha\op$. This implies that $\alpha\tx{-Acc}_\Psi(\A,\V)\simeq\Psi^+(\A_\alpha\op)$ by Proposition~\ref{free-Phi}.
	
	To conclude that the 2-functor is well-defined we only need to check that for any morphism $F$ in $\alpha\tx{-Acc}_\Psi(\A,\V)$, the $\V$-functor $\alpha\tx{-Acc}_\Psi(F,\V)$ preserves $\Psi^+$-colimits and $\alpha$-small limits. But this follows from the fact that $\alpha\tx{-Acc}_\Psi(\A,\V)$ is closed in $\P(\A\op)$ under $\Psi^+$-colimits and $\alpha$-small limits and that precomposition by $F$ in the presheaf $\V$-categories is continuous and cocontinuous.
\end{proof}

The second 2-functor involved in the duality is given below:

\begin{lema}\label{2-functor-1}
	The 2-functor
	$$ \Psi^+\tx{-Free}_\alpha(-,\V)\colon \Psi^+\tx{-}\bo{Free}_\alpha\op\longrightarrow \alpha\tx{-}\bo{Acc}_\Psi$$%
	is well-defined. Moreover, for any $\B$ in $\Psi^+\tx{-}\bo{Free}_\alpha$ we have an equivalence $$\Psi^+\tx{-Free}_\alpha(\B,\V)\simeq\alpha\tx{-Flat}(\B_{\Psi^+},\V)$$ induced by pre-composition with the inclusion $\B_{\Psi^+}\hookrightarrow\B$.
\end{lema}
\begin{proof}
	Let $\B$ be in $\Psi^+\tx{-}\bo{Free}_\alpha$ and denote by $H\colon\B_{\Psi^+}\hookrightarrow\B$ the inclusion. By the universal property of the free cocompletion under $\Psi^+$-colimits, left Kan extending along $H$ induces an equivalence
		$$ [\B_{\Psi^+},\V]\simeq \Psi^+\tx{-Coct}(\B,\V). $$
	Now, by Lemma~\ref{flat-restriction} a $\V$-functor $F\colon\B_{\Psi^+}\to\V$ is $\alpha$-flat if and only if $\tx{Lan}_HF$ is, if and only if $\tx{Lan}_HF$ is $\alpha$-continuous (by weak soundness of $\P_\alpha$). Thus, the equivalence above restricts to
	  $$ \alpha\tx{-Flat}(\B_{\Psi^+},\V)\simeq \Psi^+\tx{-Free}_\alpha(\B,\V) $$
	as claimed in the final part of the statement.
	
	This proves that for any $\B$ as above the $\V$-category $\Psi^+\tx{-Free}_\alpha(\B,\V)$ is $\alpha$-accessible (Proposition~\ref{acc=flat}); moreover, since $\Psi$-limits and $\alpha$-flat colimits commute in $\V$ with $\Psi^+$-colimits and $\alpha$-small limits, $\Psi^+\tx{-Free}_\alpha(\B,\V)$ is closed in the presheaves under such limits and colimits; thus $\Psi^+\tx{-Free}_\alpha(\B,\V)$ lies in $\alpha\tx{-}\bo{Acc}_\Psi$. 
	
	The closure of $\Psi^+\tx{-Free}_\alpha(\B,\V)$ under $\Psi$-limits and $\alpha$-flat colimits also implies that precomposition with $F\colon \B\to\B'$ in $\Psi^+\tx{-}\bo{Free}_\alpha$ induces a $\V$-functor $\Psi^+\tx{-Free}_\alpha(F,\V)$ that preserves $\alpha$-flat colimits and $\Psi$-limits. Thus the 2-functor is well-defined.
\end{proof}

Note that the two 2-functors just defined assemble into a 2-adjunction

\begin{center}
	
	\begin{tikzpicture}[baseline=(current  bounding  box.south), scale=2]

		\node (f) at (0,0.4) {$\alpha\tx{-Acc}_\Psi(-,\V)\colon \alpha\tx{-}\bo{Acc}_\Psi$};
		\node (g) at (2.7,0.4) {$\Psi^+\tx{-}\bo{Free}_\alpha\op\cocolon \Psi^+\tx{-Free}_\alpha(-,\V)$.};
		
		\path[font=\scriptsize]

		([yshift=-1.3pt]f.east) edge [->] node [above] {} ([yshift=-1.3pt]g.west)
		([yshift=1.3pt]f.east) edge [<-] node [below] {} ([yshift=1.3pt]g.west);
	\end{tikzpicture}
	
\end{center}
where $\alpha\tx{-Acc}_\Psi(-,\V)\dashv \Psi^+\tx{-Free}_\alpha(-,\V)$. Indeed, this is the same as asking for an isomorphism of categories
$$ \alpha\tx{-}\bo{Acc}_\Psi(\A,\Psi^+\tx{-Free}_\alpha(\B,\V))\cong \Psi^+\tx{-}\bo{Free}_\alpha(\B,\alpha\tx{-Acc}_\Psi(\A,\V)). $$
that is natural in $\A\in \alpha\tx{-}\bo{Acc}_\Psi$ and $\B\in \Psi^+\tx{-}\bo{Free}_\alpha$. And this follows from the fact each category above is isomorphic to the category of $\V$-functors $\A\otimes\B\to \V$ which preserve $\alpha$-flat colimits and $\Psi$-limits in the first variable, and $\Psi^+$-colimits and $\alpha$-small limits in the second (here $\otimes$ is the tensor product of $\V$-categories as in~\cite[Section~2.3]{Kel82:libro}).

Finally, we can prove the following duality theorem which, as we will see in the next section, captures the known dualities for locally finitely presentable, multipresentable, and accessible categories as instances of the same theory.

\begin{teo}\label{psi-duality}
	The 2-adjunction 
	\begin{center}
		
		\begin{tikzpicture}[baseline=(current  bounding  box.south), scale=2]

			\node (f) at (0,0.4) {$\alpha\tx{-Acc}_\Psi(-,\V)\colon \alpha\tx{-}\bo{Acc}_\Psi$};
			\node (g) at (2.7,0.4) {$\Psi^+\tx{-}\bo{Free}_\alpha\op\cocolon \Psi^+\tx{-Ex}_\alpha(-,\V)$};
			
			\path[font=\scriptsize]

			([yshift=-1.3pt]f.east) edge [->] node [above] {} ([yshift=-1.3pt]g.west)
			([yshift=1.3pt]f.east) edge [<-] node [below] {} ([yshift=1.3pt]g.west);
		\end{tikzpicture}
		
	\end{center}
	is a biequivalence of 2-categories.
\end{teo}
\begin{proof}
	It is enough to prove that the unit and counit of the 2-adjunction are equivalences.\\
	Consider $\B\in \Psi^+\tx{-}\bo{Free}_\alpha$, then the unit at $\B$ is given by the evaluation $\V$-functor 
	$$\tx{ev}\colon  \B\to \alpha\tx{-Acc}_\Psi(\Psi^+\tx{-Free}_\alpha(\B,\V),\V) ;$$
	this maps $B\in \B$ to $\tx{ev}_B\colon \Psi^+\tx{-Free}_\alpha(\B,\V)\to\V$ sending $F$ to $\tx{ev}_B(F):=FB$.
	
	By Lemma~\ref{2-functor-1}, we know that $\Psi^+\tx{-Free}_\alpha(\B,\V)\simeq \alpha\tx{-Flat}(\B_{\Psi^+},\V)$; therefore, by Lemma~\ref{2-functor-2} and the fact that $\alpha\tx{-Flat}(\B_{\Psi^+},\V)_\alpha\simeq\B_{\Psi^+}\op$, we obtain
	$$ \alpha\tx{-Acc}_\Psi(\alpha\tx{-Flat}(\B_{\Psi^+},\V),\V)\simeq \Psi^+(\alpha\tx{-Flat}(\B_{\Psi^+},\V)_\alpha\op)\simeq \Psi^+(\B_{\Psi^+})\simeq \B$$
	where the last holds by definition of $\B$. It is easy to see that the composite of this equivalences, from left to right, is an inverse to the evaluation $\V$-functor.
	
	The proof that the counit is an equivalence is quite similar. Given $\A\in \alpha\tx{-}\bo{Acc}_\Psi$ the counit at $\A$ is given by the evaluation $\V$-functor 
	$$\tx{ev}'\colon  \A\to \Psi^+\tx{-Free}_\alpha(\alpha\tx{-Acc}_\Psi(\A,\V),\V) $$
	which can be described as above.
	Then, by Lemma~\ref{2-functor-2} we know that $\alpha\tx{-Acc}_\Psi(\A,\V)\simeq \Psi^+(\A_\alpha\op)$; thus, applying also Lemma~\ref{2-functor-1}, we obtain
	$$ \Psi^+\tx{-Free}_\alpha(\Psi^+(\A_\alpha\op),\V)\simeq \alpha\tx{-Flat}(\Psi^+(\A_\alpha\op)_{\Psi^+},\V)\simeq\alpha\tx{-Flat}(\A_\alpha\op,\V)\simeq \A. $$
	Again, the composite from left to right is easily seen to be an inverse to the evaluation $\V$-functor.
\end{proof}

\begin{obs}
	It is actually possible to prove a slightly stronger statement than that given in Theorem~\ref{psi-duality} above. Denote by $\W=(\V\tx{-}\bo{CAT},\otimes,\I)$ the monoidal category given by $\V\tx{-}\bo{CAT}$ together with the tensor product defined in \cite[Section~2.3]{Kel82:libro}.
	
	We believe it is possible to show (but we do not do this here) that both $\alpha\tx{-}\bo{Acc}_\Psi$ and $\Psi^+\tx{-}\bo{Free}_\alpha$ can be endowed with a structure of $\W$-category by taking the hom-objects to be the $\V$-categories $\alpha\tx{-}\tx{Acc}_\Psi(\A,\A')$ and $\Psi^+\tx{-Free}_\alpha(\B,\B')$ respectively. Assuming this, the proof of Theorem~\ref{psi-duality} actually shows that we have a biequivalence of $\W$-categories. Then, the biequivalence of 2-categories above could be obtained by applying the change of base functor $(-)_0\colon\W=\V\tx{-}\bo{CAT}\to\bo{CAT}$.
\end{obs}

\begin{obs}\label{moregeneral-sound}
	As mentioned in Remark~\ref{Phi-accessible}, Theorem~\ref{psi-duality} could be further generalized to the setting of a locally small and weakly sound class $\Phi$ in place of the class of $\alpha$-small weights. The idea being that all the proofs can be adapted by replacing $\alpha$-small limits with $\Phi$-limits and $\alpha$-flat colimits with $\Phi$-flat colimits, and by using the soundness property, as well as Lemma~\ref{flat-restriction} (applied to $\Phi$). We decided to keep the case of $\alpha$-small limits as the default setting to avoid an even heavier notation.\\
	The resulting theorem then would say that the 2-functors 
	\begin{center}
		
		\begin{tikzpicture}[baseline=(current  bounding  box.south), scale=2]

			\node (f) at (0,0.4) {$\Phi\tx{-Acc}_\Psi(-,\V)\colon \Phi\tx{-}\bo{Acc}_\Psi$};
			\node (g) at (2.7,0.4) {$\Psi^+\tx{-}\bo{Free}_\Phi\op\cocolon \Psi^+\tx{-Free}_\Phi(-,\V)$};
			
			\path[font=\scriptsize]

			([yshift=1.3pt]f.east) edge [->] node [above] {} ([yshift=1.3pt]g.west)
			([yshift=-1.3pt]f.east) edge [<-] node [below] {} ([yshift=-1.3pt]g.west);
		\end{tikzpicture}
		
	\end{center}
	form a biequivalence of 2-categories. Here $\Phi\tx{-}\bo{Acc}_\Psi$ is the 2-category whose objects are the $\Phi$-accessible $\V$-categories with $\Psi$-limits (\cite[Section~3.1]{LT22:virtual}), whose morphisms are the $\alpha$-continuous and $\Phi^+$-cocontinuous $\V$-functors, and whose 2-cells are $\V$-natural transformations. On the other hand, $\Psi^+\tx{-}\bo{Free}_\Phi$ is the 2-category of the $\Phi$-complete and $\Psi^+$-free $\V$-categories, $\Phi$-continuous and $\Psi^+$-cocontinuous $\V$-functors, and $\V$-natural transformations.
	
	By taking $\V=\bo{Set}$, $\Psi=\P$, and $\Phi$ to be the class for finite products one would then recover the duality of \cite{adamek2003duality} between finitary varieties and algebraic theories; while for a general (weakly) sound $\Phi$ one obtains the duality discussed in \cite{centazzo2002duality} building on the work of \cite{ABLR02:articolo}. For a general $\V$, but still $\Psi=\P$, a version of the corresponding duality was proved in the 2-categorical context \cite[Theorem~3.14]{di2023accessibility} by considering the 2-category $\V\tx{-}\bo{CAT}$ and using the notion of KZ doctrine and GU envelope (see also \cite[Remark~4.10]{LT22:limits}).
\end{obs}

\section{Examples}\label{examples-duality}

As a consequence of our Theorem~\ref{psi-duality}, each of the weakly sound classes $\Psi$ from Example~\ref{examplewsc} induces a duality theorem. Let us see some in particular.

When $\Psi=\P$, then $\Psi^+=\Q$ and we recover the Gabriel-Ulmer duality for locally $\alpha$-presentable categories which in the enriched context was first proved by Kelly:

\begin{teo}[\cite{Kel82:articolo}]
	The 2-functors 
	\begin{center}
		
		\begin{tikzpicture}[baseline=(current  bounding  box.south), scale=2]

			\node (f) at (0,0.4) {$\alpha\tx{-Lp}(-,\V)\colon \alpha\tx{-}\bo{Lp}$};
			\node (g) at (2.2,0.4) {$\bo{Lex}_\alpha\op\cocolon \tx{Lex}_\alpha(-,\V).$};
			
			\path[font=\scriptsize]

			([yshift=1.3pt]f.east) edge [->] node [above] {} ([yshift=1.3pt]g.west)
			([yshift=-1.3pt]f.east) edge [ <-] node [below] {} ([yshift=-1.3pt]g.west);
		\end{tikzpicture}
		
	\end{center}
	form a biequivalence of 2-categories.
\end{teo}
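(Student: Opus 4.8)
The statement is the Gabriel--Ulmer duality, which I would derive as the special case $\Psi=\P$ (all small weights) of Theorem~\ref{main-sound}. The plan is to check that $\P$ is a weakly sound class, identify the induced companion $\mt C^{\P^+}=\Q$ with the colimit type of absolute (Cauchy) colimits, and then match the notation: $\alpha\tx{-}\bo{Acc}_\P=\alpha\tx{-}\bo{Lp}$ and $\Q\tx{-}\bo{Ex}_\alpha=\bo{Lex}_\alpha$, after which the desired biequivalence is literally Theorem~\ref{main-sound}.

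\textbf{Step 1: $\P$ is weakly sound and $\P^+=\Q$.} First I would recall that the class $\P$ of all small weights is (weakly) sound in the sense of \cite[Definition~4.4]{LT22:limits}, and that a weight is $\P$-flat precisely when it is absolute (its colimit commutes with all small limits), i.e.\ $\P^+=\Q$ is the class of Cauchy weights. Consequently $\mt C^{\P^+}=\Q$ in the sense of Example~\ref{colimittype-ex}, and $\Q$-colimits are exactly the absolute colimits. This uses only standard enriched category theory (Cauchy completeness, absolute colimits) together with the references already cited in the excerpt.

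\textbf{Step 2: Translate the two 2-categories.} On the accessibility side, an $\alpha$-accessible $\V$-category with \emph{all} $\P$-limits is precisely a locally $\alpha$-presentable $\V$-category, so $\alpha\tx{-}\bo{Acc}_\P=\alpha\tx{-}\bo{Lp}$; here I would note that a $\P$-continuous functor that preserves $\alpha$-flat colimits is the same as an $\alpha$-continuous $\alpha$-accessible functor (equivalently a right adjoint that preserves $\alpha$-filtered colimits, by the general theory of \cite{BQ96:articolo}), matching the morphisms of $\alpha\tx{-}\bo{Lp}$; and $\alpha\tx{-Acc}_\P(-,\V)=\alpha\tx{-Lp}(-,\V)$. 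On the exact side, a $\Q$-based $\V$-category that is $\alpha$-complete is, by Remark~\ref{absolute-alpha} and Step~1, just an $\alpha$-complete $\V$-category (free cocompletion under absolute colimits is a no-op on a Cauchy complete category, and $\alpha$-completeness already forces Cauchy completeness); thus $\Q\tx{-}\bo{Ex}_\alpha$ is the 2-category $\bo{Lex}_\alpha$ of small $\alpha$-complete $\V$-categories, $\alpha$-continuous $\V$-functors, and $\V$-natural transformations, and $\Q\tx{-Ex}_\alpha(-,\V)=\tx{Lex}_\alpha(-,\V)$ since $\Q$-cocontinuity is automatic.

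\textbf{Step 3: Conclude.} With these identifications in place, the asserted biequivalence
\[
\alpha\tx{-Lp}(-,\V)\colon\alpha\tx{-}\bo{Lp}\;\rightleftarrows\;\bo{Lex}_\alpha\op\cocolon\tx{Lex}_\alpha(-,\V)
\]
is exactly the content of Theorem~\ref{main-sound} applied to $\Psi=\P$. The main obstacle is not difficulty but bookkeeping: the delicate point is Step~2, namely verifying carefully that ``$\alpha$-complete and $\Q$-based'' collapses to ``$\alpha$-complete'' (so that nothing in the $\Q$-cocompletion is lost or spuriously added), and that the morphism classes on both sides coincide with the classical ones; once these dictionary entries are confirmed, the theorem is immediate.
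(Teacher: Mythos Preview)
Your proposal is correct and follows exactly the paper's approach: the paper derives this theorem as the instance $\Psi=\P$, $\Psi^+=\Q$ of Theorem~\ref{main-sound}, and your Steps~1--3 spell out precisely the identifications (using Remark~\ref{absolute-alpha} for the collapse of $\Q$-basedness under $\alpha$-completeness) that the paper leaves implicit.
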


Indeed, that complete $\alpha$-accessible $\V$-categories are the same as locally $\alpha$-presentable $\V$-categories is standard (it follows for instance from~\cite[Theorem~4.22]{LT22:limits})) On the other hand, every small $\alpha$-complete  $\V$-category is Cauchy complete by Remark~\ref{absolute-alpha}. Thus $\Q\tx{-}\bo{Free}_\alpha=\bo{Lex}_\alpha$ coincides with the 2-category of $\alpha$-complete $\V$-categories.

When $\V=\bo{Set}$ and $\Psi$ is the class for connected limits, then $\Psi^+=\bo{Fam}$ is the class generated by discrete categories (the class for coproducts), and we recover Diers duality for locally $\alpha$-multipresentable categories. Below the 2-category $\bo{Fam}\tx{-}\bo{Lex}_\alpha$ has $\alpha$-complete coproduct cocompletions of small categories as objects, $\alpha$-continuous and coproduct-preserving functors as morphisms, and natural transformations as 2-cells.

\begin{teo}[\cite{Die80:articolo}]
	The 2-functors 
	\begin{center}
		
		\begin{tikzpicture}[baseline=(current  bounding  box.south), scale=2]
			
			\node (e) at (-1.5,0) {};
			\node (f) at (0,0) {$\alpha\tx{-Lmp}(-,\bo{Set})\colon \alpha\tx{-}\bo{Lmp}$};
			\node (g) at (3.1,0) {$\bo{Fam}\tx{-}\bo{Lex}_\alpha\op\cocolon \tx{Fam-Lex}_\alpha(-,\bo{Set}).$};
			
			\path[font=\scriptsize]

			([yshift=1.3pt]f.east) edge [->] node [above] {} ([yshift=1.3pt]g.west)
			([yshift=-1.3pt]f.east) edge [ <-] node [below] {} ([yshift=-1.3pt]g.west);
		\end{tikzpicture}
		
	\end{center}
	form a biequivalence of 2-categories.
\end{teo}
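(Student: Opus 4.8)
The plan is to derive the Diers duality theorem as a direct specialization of Theorem~\ref{main-sound}. First I would verify that $\V=\bo{Set}$ is trivially locally $\alpha$-presentable as a closed category for every regular $\alpha$, and that the class $\Psi$ of connected limits is weakly sound in $\bo{Set}$; this last fact is exactly \cite[Example~4.8]{LT22:limits}, where the relevant orthogonality condition (that the terminal $\V$-category is $\Psi$-flat, equivalently that $\bo{Set}$ itself, viewed via its representables, behaves well under connected limits) is checked. Having placed ourselves in the hypotheses of Section~\ref{wsc}, the companion $\mt C^{\Psi^+}$ is available, and since $\bo{Set}$ is just ordinary category theory, $\Psi^+$ is automatically accessible as a companion (its sketches are trivial), so Assumption~\ref{assumption} holds and Theorem~\ref{main-sound} applies verbatim.

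The second step is the identification of $\Psi^+$ with $\bo{Fam}$. A weight (here: a functor) $M\colon\C\op\to\bo{Set}$ is $\Psi$-flat precisely when $M$-weighted colimits commute with connected limits in $\bo{Set}$; a standard computation shows this happens exactly when $M$ is a small coproduct of representables, i.e. when the category of elements of $M$ is a coproduct of categories each having a terminal object, equivalently a disjoint union of connected components each filtered-free in the appropriate sense. Thus $\Psi\tx{-Flat}(\C,\bo{Set})$ is the closure of the representables under coproducts, so $\Psi^+=\bo{Fam}$ and $\Psi^+$-colimits are exactly (small) coproducts of the $\Psi$-flat colimits, which here reduce to coproducts (since a $\Psi$-flat colimit of representables is a coproduct of representables). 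I would then observe that the $\mt C^{\bo{Fam}}$-based $\bo{Set}$-categories are precisely the $\alpha$-complete free coproduct-cocompletions of small categories, matching the description of the objects of $\bo{Fam}\tx{-}\bo{Lex}_\alpha$ given just before the statement, and that the morphisms ($\alpha$-continuous and coproduct-preserving functors between these, equivalently on the $\mt C$-presentable parts) coincide.

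The third step is to match the left-hand 2-categories. By Theorem~\ref{duality-discrete}, an $\alpha$-accessible category $\A$ has connected limits iff $\A$ is locally $\alpha$-multipresentable in Diers' sense; one checks that ``$\alpha$-accessible with connected limits'' is exactly the standard definition of locally $\alpha$-multipresentable category (finitely accessible with connected limits, generalized to $\alpha$), so $\alpha\tx{-}\bo{Acc}_\Psi=\alpha\tx{-}\bo{Lmp}$, and the morphisms (connected-limit-preserving and $\alpha$-flat-colimit-preserving functors) are precisely the morphisms of $\alpha\tx{-}\bo{Lmp}$. Finally I would note that $\alpha\tx{-Acc}_\Psi(-,\bo{Set})=\alpha\tx{-Lmp}(-,\bo{Set})$ and $\Psi^+\tx{-Ex}_\alpha(-,\bo{Set})=\tx{Fam-Lex}_\alpha(-,\bo{Set})$ under these identifications, so the biequivalence is exactly the instance of Theorem~\ref{main-sound} at $\V=\bo{Set}$, $\Psi=$ connected limits. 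The main obstacle I anticipate is not conceptual but bookkeeping: carefully confirming that the ad hoc definitions of ``locally $\alpha$-multipresentable category'', ``its morphisms'', and ``$\bo{Fam}$-exact / coproduct-cocompletion category'' used classically by Diers agree on the nose with the specializations of the general-framework notions — in particular that Diers' multicolimit condition corresponds to $\mt C^{\bo{Fam}}$-virtual $\alpha$-completeness of $\A_\alpha\op$ via the example already recorded in Section~\ref{duality1} — rather than merely up to some equivalence that would need extra argument.
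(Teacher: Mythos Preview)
Your proposal is correct and follows exactly the paper's approach: the theorem is stated without separate proof as the instance of Theorem~\ref{main-sound} obtained by taking $\V=\bo{Set}$ and $\Psi$ the class of connected limits, with the identification $\Psi^+=\bo{Fam}$ and the resulting matching of the two 2-categories. The additional bookkeeping you outline (soundness of connected limits, $\Psi$-flat $=$ coproduct of representables, identification of $\alpha\tx{-}\bo{Acc}_\Psi$ with $\alpha\tx{-}\bo{Lmp}$) is precisely what the paper handles generically in Section~\ref{wsc} and then leaves implicit when stating this corollary.
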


A 2-categorical version of this duality can be obtained by taking $\V=\bo{Cat}$ and $\Psi$ to be the weakly sound class consisting of the connected 2-limits described in Example~\ref{examplewsc}(11). 

When $\V$ is a generic base of enriched (as in our assumptions) and $\Psi=\emptyset$, so that $\Psi^+=\P$, we obtain a duality for $\alpha$-accessible $\V$-categories (with no existence of limit required): 

\begin{teo}\label{finacc-presh}
	The 2-functors 
	\begin{center}
		
		\begin{tikzpicture}[baseline=(current  bounding  box.south), scale=2]
			
			\node (f) at (0,0) {$\alpha\tx{-Acc}(-,\V)\colon\alpha\tx{-}\bo{Acc}$};
			\node (g) at (2.6,0) {$\P\tx{-}\bo{Lex}_\alpha\op\cocolon \P\tx{-Lex}_\alpha(-,\V).$};
			
			\path[font=\scriptsize]

			([yshift=1.3pt]f.east) edge [->] node [above] {} ([yshift=1.3pt]g.west)
			([yshift=-1.3pt]f.east) edge [ <-] node [below] {} ([yshift=-1.3pt]g.west);
		\end{tikzpicture}
		
	\end{center}
	form a biequivalence of 2-categories.
\end{teo}

Here the 2-category $\P\textnormal{-}\bo{Lex}_\alpha$ has presheaf $\V$-categories as objects, $\alpha$-continuous and cocontinuous $\V$-functors as morphisms, and $\V$-natural transformations as 2-cells.

In the enriched context this duality is new, while for $\alpha=\aleph_0$ and $\V=\bo{Set}$ it first appeared as \cite[Proposition~4.2.1]{MP89:libro}. Note that this is part of a larger adjunction between accessible categories with filtered colimits and Grothendieck topoi \cite{DiLib20:articolo}. In Section~\ref{Scott-V} we construct an enriched version of it.

When $\Psi$ is a locally small class of weights, then $\Psi^+$-cocompletions of small $\V$-categories coincide with the $\Psi$-accessible $\V$-categories of \cite[Section~3]{LT22:virtual} (by \cite[Proposition~3.10]{LT22:virtual}). It follows that $\Psi^+\textnormal{-}\bo{Free}_\alpha= \Psi\textnormal{-}\bo{Acc}_\alpha$ is the same as the 2-category of $\Psi$-accessible $\V$-categories with $\alpha$-small limits, $\alpha$-continuous and $\Psi^+$-cocontinuous $\V$-functors, and $\V$-natural transformations. Thus the duality becomes:

\begin{teo}\label{psi-small}
	Let $\Psi$ be a small and weakly sound class of weights; then the 2-functors 
	\begin{center}
		
		\begin{tikzpicture}[baseline=(current  bounding  box.south), scale=2]

			\node (f) at (0,0.4) 	{$\alpha\tx{-Acc}_\Psi(-,\V)\colon\alpha\tx{-}\bo{Acc}_\Psi$};
			\node (g) at (2.8,0.4) {$\Psi\tx{-}\bo{Acc}_\alpha\op\cocolon 	\Psi\textnormal{-}\tx{Acc}_\alpha(-,\V).$};
			
			\path[font=\scriptsize]
			
			([yshift=1.3pt]f.east) edge [->] node [above] {} ([yshift=1.3pt]g.west)
			([yshift=-1.3pt]f.east) edge [<-] node [below] {} ([yshift=-1.3pt]g.west);
		\end{tikzpicture}
		
	\end{center}
	form a biequivalence of 2-categories.
\end{teo}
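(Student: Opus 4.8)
The plan is to derive this statement directly from Theorem~\ref{main-sound} by showing that, for a small and weakly sound $\Psi$, the 2-category $\Psi^+\tx{-}\bo{Ex}_\alpha$ coincides with $\Psi\tx{-}\bo{Acc}_\alpha$. Since $\Psi$ is small it is in particular locally small, so $\Psi^+$ (the class of $\Psi$-flat weights) is a legitimate class and the whole discussion of Section~\ref{wsc} applies: $\mt C^{\Psi^+}$ is a companion for $\Psi$ satisfying (a) and (b), and it is accessible as a companion with trivial sketches, so Assumption~\ref{assumption} holds and Theorem~\ref{main-sound} already gives a biequivalence between $\alpha\tx{-}\bo{Acc}_\Psi$ and $\Psi^+\tx{-}\bo{Ex}_\alpha\op$, with the two 2-functors given by homming into $\V$.

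First I would identify the objects. An object of $\Psi^+\tx{-}\bo{Ex}_\alpha$ is, in the description of Section~\ref{goodcase}, an $\alpha$-complete and $\Psi^+$-based $\V$-category $\E$; since for the colimit type $\mt C^{\Psi^+}$ a diagram of type $\mt C$ is simply one whose weight is $\Psi$-flat (with no condition on the diagram itself), $\Psi^+$-colimits are exactly $\Psi$-flat colimits, and $\mt C^{\Psi^+}$-based means $\E$ is the free cocompletion of a small $\V$-category under $\Psi$-flat colimits. By \cite[Proposition~3.10]{LT22:virtual} such free cocompletions are precisely the $\Psi$-accessible $\V$-categories, so the objects of $\Psi^+\tx{-}\bo{Ex}_\alpha$ are exactly the $\Psi$-accessible $\V$-categories that in addition have $\alpha$-small limits; and for a Cauchy complete $\B$ the small $\V$-category recovered as $\E_{\Psi^+}$ is $\B$ itself by \cite[Proposition~7.5]{KS05:articolo}, matching the bookkeeping of Section~\ref{goodcase}.

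Next the morphisms and 2-cells. A morphism in $\Psi^+\tx{-}\bo{Ex}_\alpha$ is an $\alpha$-continuous, $\mt C^{\Psi^+}$-cocontinuous $\V$-functor; but preservation of diagrams of type $\mt C^{\Psi^+}$ is automatic (being $\Psi$-flat is a condition on the weight alone), so $\mt C^{\Psi^+}$-cocontinuity is just preservation of $\Psi$-flat colimits, which are exactly the morphisms of $\Psi\tx{-}\bo{Acc}_\alpha$; the 2-cells are $\V$-natural transformations on both sides. Under this identification the 2-functor $\mt C^{\Psi^+}\tx{-Ex}_\alpha(-,\V)$ of Theorem~\ref{duality-hom} becomes $\Psi\tx{-Acc}_\alpha(-,\V)$ and $(-)_\alpha\op$ becomes $\alpha\tx{-Acc}_\Psi(-,\V)$ (by the proof of Lemma~\ref{2-functor-2}), and the biequivalence of Theorem~\ref{main-sound} transports to the claimed one.

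The only step requiring genuine care — and the main potential obstacle — is making the object identification exact: one must be sure that an \emph{$\alpha$-complete free cocompletion of a small $\V$-category under $\Psi$-flat colimits} really is the same as a \emph{$\Psi$-accessible $\V$-category with $\alpha$-small limits}, including that the $\alpha$-small limits agree on both sides. Here \cite[Proposition~3.10]{LT22:virtual} supplies the equivalence of the underlying $\V$-categories, and the fact that the relevant limits and colimits in the ambient presheaf-type $\V$-categories are computed pointwise ensures the $\alpha$-complete structures coincide; everything else is routine and carries over verbatim from Theorems~\ref{psi-duality} and~\ref{duality-hom}.
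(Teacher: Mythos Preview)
Your proposal is correct and follows essentially the same approach as the paper: the paper also derives this theorem directly from Theorem~\ref{main-sound} by identifying $\Psi^+\tx{-}\bo{Ex}_\alpha$ with $\Psi\tx{-}\bo{Acc}_\alpha$ via \cite[Proposition~3.10]{LT22:virtual}, noting that for locally small $\Psi$ the $\Psi^+$-based $\V$-categories are exactly the $\Psi$-accessible ones. Your argument is in fact more detailed than the paper's (which treats this as an immediate specialization), but the content is the same.
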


This theorem has a couple of nice consequences. First, consider $\V=\bo{Set}$ and $\Psi=\bo{Fp}$ to be the weakly sound class for finite products. Then $\Psi^+=\bo{Sind}$ is the class of weights for sifted colimits (see for instance~\cite{ARV2010algebraic} for a characterization of these). The notion of $\bo{Fp}$-accessible category has been treated in \cite{AR11:articolo}, where they are called {\em generalized varieties}. Then, Theorem~\ref{psi-small} above implies that the 2-category of $\alpha$-accessible categories with finite products is dually equivalent to the 2-category of generalized varieties with $\alpha$-small limits. 

\begin{obs}
	The observation above has also an enriched generalization which applies whenever $\V$ is cartesian closed, since then the class of weights for finite products is weakly sound (Example~\ref{examplewsc}).
\end{obs}

Secondly, when $\Psi=\P_\alpha$ is the class of $\alpha$-small weights, a $\Psi$-accessible $\V$-category is just an $\alpha$-accessible $\V$-category. Thus $\Psi\tx{-}\bo{Acc}_\alpha= \alpha\tx{-}\bo{Acc}_\alpha$ coincides with the 2-category of $\alpha$-complete and $\alpha$-accessible $\V$-categories, $\alpha$-continuous and $\alpha$-flat-colimit preserving $\V$-functors, and $\V$-natural transformations. Therefore we obtain the following:

\begin{cor}\label{bi-inv}
	There is a biequivalence of 2-categories 
	$$ \alpha\tx{-}\bo{Acc}_\alpha \simeq (\alpha\tx{-}\bo{Acc}_\alpha)\op $$%
	which is induced by the 2-functor $\alpha\tx{-Acc}_\alpha(-,\V)$.
\end{cor}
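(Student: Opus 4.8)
The plan is to derive the corollary as the special case of the preceding theorem in which $\Psi$ is taken to be the class of all $\alpha$-small weights, so that the real content is checking that this choice of $\Psi$ is admissible and that, for it, the two 2-categories and the two 2-functors appearing in that theorem both collapse to $\alpha\tx{-}\bo{Acc}_\alpha$ and $\alpha\tx{-Acc}_\alpha(-,\V)$ respectively.

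First I would record that the class of $\alpha$-small weights is small (it is a set up to equivalence, since there is a set of $\alpha$-small $\V$-categories up to equivalence and a set of $\alpha$-small weights on each) and that it is weakly sound: this is the prototypical example, appearing among the weakly sound classes of \cite[Example~4.8]{LT22:limits}, and its soundness in the enriched setting is precisely what the standing hypothesis that $\V$ be locally $\alpha$-presentable as a closed category guarantees. Hence the hypotheses of the preceding theorem are satisfied for this $\Psi$.

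Next I would match up the two sides. For this $\Psi$ the associated companion $\Psi^+$ is the class of $\Psi$-flat weights, and $\Psi$-flat colimits for $\Psi$ the class of $\alpha$-small weights are exactly the $\alpha$-flat colimits; hence $\Psi^+$-cocontinuity coincides with preservation of $\alpha$-flat colimits. Since $\Psi$-limits are $\alpha$-small limits and $\Psi$-continuity is $\alpha$-continuity, the 2-category $\alpha\tx{-}\bo{Acc}_\Psi$ is exactly $\alpha\tx{-}\bo{Acc}_\alpha$; and, as observed in the text preceding the corollary, a $\Psi$-accessible $\V$-category for this $\Psi$ is just an $\alpha$-accessible $\V$-category, so that $\Psi\tx{-}\bo{Acc}_\alpha$ — with its $\alpha$-completeness requirement and its $\Psi^+$-cocontinuous ($=\alpha$-flat-colimit-preserving) morphisms — is again $\alpha\tx{-}\bo{Acc}_\alpha$. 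Finally, for any such $\A$ the category $\alpha\tx{-Acc}_\Psi(\A,\V)$ of $\V$-functors preserving $\Psi$-limits and $\alpha$-flat colimits is the category $\alpha\tx{-Acc}_\alpha(\A,\V)$, and $\Psi\tx{-Acc}_\alpha(\A,\V)$, the category of $\alpha$-continuous and $\Psi^+$-cocontinuous $\V$-functors, is the same full subcategory of $[\A,\V]$; so both 2-functors of the preceding theorem agree with $\alpha\tx{-Acc}_\alpha(-,\V)$.

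Plugging these identifications into the preceding theorem yields a biequivalence $\alpha\tx{-}\bo{Acc}_\alpha\simeq(\alpha\tx{-}\bo{Acc}_\alpha)\op$ implemented in both directions by $\alpha\tx{-Acc}_\alpha(-,\V)$, which is the assertion. I do not expect a genuine obstacle here: the argument is bookkeeping on top of the earlier theorem. The only points requiring a moment's care are the verification that the class of $\alpha$-small weights is weakly sound (and small) in the present enriched setting — which is standard and is in any case already invoked when the examples are run through — and the routine reconciliation of the two descriptions of the morphisms, $\Psi^+$-cocontinuous versus $\alpha$-flat-colimit-preserving.
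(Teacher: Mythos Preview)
Your argument is correct and follows exactly the paper's approach: specialize the preceding theorem to $\Psi$ the class of $\alpha$-small weights, note that this class is small and weakly sound, and identify both $\alpha\tx{-}\bo{Acc}_\Psi$ and $\Psi\tx{-}\bo{Acc}_\alpha$ with $\alpha\tx{-}\bo{Acc}_\alpha$ (and both 2-functors with $\alpha\tx{-Acc}_\alpha(-,\V)$). The paper gives this identification in the paragraph immediately before the corollary and states the corollary without further proof; your version simply spells out the same bookkeeping in more detail.
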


Let $\Sigma=\alpha\tx{-Acc}_\alpha(-,\V)$ be the 2-functor involved in the duality; then in particular $\Sigma$ is a bi-involution: $\Sigma^2\simeq 1$. We can give a more direct way to describe the action of $\Sigma$ on objects and morphisms as follows.

Given an $\alpha$-accessible $\V$-category $\A$, left Kan extending along the inclusion induces an equivalence $[\A_\alpha,\V]\simeq \alpha\tx{-Acc}(\A,\V)$; if $\A$ is moreover $\alpha$-complete then, by \cite[Lemma~2.7]{LT21:articolo}, the equivalence restricts to $\alpha\tx{-Flat}(\A_\alpha,\V)\simeq \alpha\tx{-Acc}_\alpha(\A,\V)$. Thus $\Sigma\A\simeq\alpha\tx{-Flat}(\A_\alpha,\V)$, or equivalently: $$\Sigma(\alpha\tx{-Flat}(\C\op,\V))\simeq \alpha\tx{-Flat}(\C,\V).$$%
Similarly, given a morphism $F\colon\A\to\B$ in $\alpha\tx{-}\bo{Acc}_\alpha$ with inclusions $J\colon\A_\alpha\to\A$ and $H\colon\B_\alpha\to\B$, a few calculations show that the 2-functor $\Sigma$ acts as follows:
\begin{center}
	\begin{tikzpicture}[baseline=(current  bounding  box.south), scale=2]
		
		\node (a0) at (0,0.4) {$\alpha\tx{-Flat}(\B_\alpha,\V) $};
		\node (b0) at (1.8,0.4) {$\alpha\tx{-Flat}(\A_\alpha,\V)$};
		\node (c0) at (0.1,0) {$X$};
		\node (d0) at (1.8,0) {$(\tx{Lan}_HX)FJ.$};
		
		\path[font=\scriptsize]
		
		(a0) edge [->] node [above] {$\Sigma F$} (b0)
		(c0) edge [dashed, |->] node [below] {} (d0);
		
	\end{tikzpicture}	
\end{center}
The resulting $\V$-functor $(\tx{Lan}_HX)FJ$ is still $\alpha$-flat since $X$ is $\alpha$-flat and $F$ is $\alpha$-continuous and $\alpha$-flat-colimit preserving.

\section{Accessible $\V$-categories and $\V$-topoi}\label{Scott-V}

We have seen, as part of the duality in Theorem~\ref{finacc-presh}, that if $\A$ is a finitely accessible $\V$-category then $\aleph_0\tx{-Acc}(\A,\V)$ is equivalent to the presheaf $\V$-category $[\A_f,\V]$. Conversely, given any category of presheaves $[\C,\V]$ the category $\P\tx{-Lex}([\C,\V],\V)\simeq\tx{Flat}(\C\op,\V)$ is finitely accessible. In the ordinary case this is actually part of a wider adjunction between the 2-category of accessible categories with filtered colimits and that of Grothendieck topoi (see \cite{DiLib20:articolo}). We shall prove in this section that such an adjunction can be extended to the enriched setting by considering accessible $\V$-categories with flat colimits and the notion of $\V$-topos defined in \cite{GL12:articolo}.

We assume for simplicity that $\V$ is locally finitely presentable as a closed category; nonetheless, everything can be carried out, with the necessary modifications, in the infinitary case.

\begin{Def}[\cite{GL12:articolo}]
	We say that a $\V$-category $\E$ is a {\em $\V$-topos} if it is a left exact localization of a presheaf $\V$-category $[\C\op,\V]$. In other words, $\E$ is a $\V$-topos if there exists a fully faithful $J\colon\E\hookrightarrow[\C\op,\V]$ which has a lex left adjoint. Denote by $\V\tx{-}\bo{Top}$ the 2-category of $\V$-topoi, lex cocontinuous $\V$-functors, and $\V$-natural transformations. 
\end{Def}

\begin{obs}\label{lp-topos}
	By \cite[Proposition~2.6]{GL12:articolo} the small $\V$-category $\C$ above can be chosen to be a full subcategory of $\E$. That is, if $\E$ is a $\V$-topos then there exists a small dense full subcategory $H\colon\C\hookrightarrow\E$ for which the induced nerve $\V$-functor $J\colon\E\to[\C\op,\V]$ is fully faithful (by density of $\C$) and has a lex left adjoint. 
	
	Moreover, every $\V$-topos is locally presentable and the inclusion $J\colon\E\hookrightarrow [\C\op,\V]$ is an accessible embedding by \cite[Proposition~A.2]{GL12:articolo}.
\end{obs}

Denote by $\bo{Acc}^{\aleph_0}$ the 2-category of accessible $\V$-categories with filtered colimits, finitary (that is, flat colimits preserving) $\V$-functors, and $\V$-natural transformations. The same convention of Notation~\ref{notation} applies in this context for $\bo{Acc}^{\aleph_0}$ and $\V\tx{-}\bo{Top}$.

Then the result below can be seen as an extension of the Scott adjunction \cite{DiLib20:articolo} to the enriched context.

\begin{teo}\label{Vscott}
	The following
	\begin{center}
		
		\begin{tikzpicture}[baseline=(current  bounding  box.south), scale=2]

			\node (f) at (0,0.4) {$\tx{Acc}^{\aleph_0}(-,\V)\colon\bo{Acc}^{\aleph_0}$};
			\node (g) at (2.9,0.4) {$\V\tx{-}\bo{Top}\op\cocolon \V\tx{-Top}(-,\V)$};
			\node (h) at (1.4,0.45) {$\perp$};
			
			\path[font=\scriptsize]

			([yshift=-1.3pt]f.east) edge [->] node [above] {} ([yshift=-1.3pt]g.west)
			([yshift=1.3pt]f.east) edge [bend left,<-] node [below] {} ([yshift=1.3pt]g.west);
		\end{tikzpicture}
		
	\end{center}
	defines a 2-adjunction.
\end{teo} 
\begin{proof}
	The only non-trivial part is to show that the 2-functors are well-defined; indeed, the universal property that defines the 2-adjunction can be obtained by using the same arguments showing that the 2-functors of Theorem~\ref{psi-duality} form a 2-adjunction (see the discussion just above the statement). 
	
	Let $\E$ be a $\V$-topos; then $\V\tx{-Top}(\E,\V)$ is closed in $[\E,\V]$ under flat colimits (since these commute with finite limits), so that we only need to prove that it is accessible.
	Thanks to Remark~\ref{lp-topos}, the $\V$-topos $\E$ is locally presentable and we can take $H\colon\C\hookrightarrow \E$ small and such that the nerve $\V$-functor
	$$J\colon \E\hookrightarrow\P(\C)=[\C\op,\V],$$
	defined by $JE:=\E(H-,E)$, is fully faithful and has a lex left adjoint $L$. Consider now $\alpha$ such that $\E$ is locally $\alpha$-presentable; we will prove that the square below is a bipullback in $\V$-$\bo{CAT}$.
	\begin{center}
		\begin{tikzpicture}[baseline=(current  bounding  box.south), scale=2]
			
			\node (a0) at (0,0.8) {$\V\tx{-Top}(\E,\V)$};
			\node (b0) at (0,0) {$\alpha\tx{-Acc}(\E,\V)$};
			\node (c0) at (1.8,0.8) {$\V\tx{-Top}(\P(\C),\V)$};
			\node (d0) at (1.8,0) {$\alpha\tx{-Acc}(\P(\C),\V)$};
			\node (e0) at (0.3,0.55) {$\lrcorner$};
			
			\path[font=\scriptsize]
			
			(a0) edge [right hook->] node [above] {} (b0)
			(a0) edge [->] node [above] {$-\circ L$} (c0)
			(b0) edge [->] node [below] {$-\circ L$} (d0)
			(c0) edge [right hook->] node [below] {} (d0);
		\end{tikzpicture}	
	\end{center}
	For that, it is enough to notice that a $\V$-functor $F\colon\E\to\V$ is lex-cocontinuous if and only if $FL$ is. One direction is clear (since $L$ is lex-cocontinuous), for the other assume that $FL$ is lex-cocontinuous, then $F\cong FLJ$ is lex because $J$ preserves all limits. Moreover for any diagram $D\colon\D\to\E$ and weight $M\colon\D\op\to\V$ we obtain:
	\begin{equation*}
		\begin{split}
			F(M*D)&\cong F(M* LJD)\\
			&\cong FL(M* JD)\\
			&\cong M*FLJD\\
			&\cong M*FD\\
		\end{split}
	\end{equation*}
	so that $F$ is cocontinuous. Now, Theorem~\ref{finacc-presh} provides an equivalence of categories $\V\tx{-Top}(\P(\C),\V)\simeq\tx{Flat}(\C,\V)$, while $\alpha\tx{-Acc}(\E,\V)\simeq [\E_\alpha,\V]$ and $\alpha\tx{-Acc}(\P(\C),\V)\simeq [\P(\C)_\alpha,\V]$ since $\E$ and $\P(\C)$ are locally $\alpha$-presentable. Moreover, the inclusion $\V\tx{-Top}(\E,\V)\hookrightarrow \alpha\tx{-Acc}(\P(\C),\V)$ preserves flat-colimits (since these commute with all colimits and finite limits in $\V$), and the $\V$-functor $-\circ L\colon \alpha\tx{-Acc}(\E,\V)\to \alpha\tx{-Acc}(\P(\C),\V)$ is cocontinuous since colimits are computed pointwise in both $\V$-categories.
	
	It follows that $\V\tx{-Top}(\E,\V)$ can be seen as a bipullback of accessible $\V$-categories along accessible $\V$-functors; thus it is itself accessible by~\cite[Theorem~5.5]{LT22:virtual}. 
	The fact that $\V\tx{-Top}(F,\V)$ is finitary whenever $F\colon\E\to\F$ is lex cocontinuous, follows from the fact that flat colimits are computed pointwise in $\V\tx{-Top}(\E,\V)$ and $\V\tx{-Top}(\F,\V)$, and precomposition with $F$ is continuous and cocontinuous in the presheaf $\V$-categories.
	
	Conversely, let $\A$ be an accessible $\V$-category with flat colimits and consider $\alpha$ such that $\A$ is $\alpha$-accessible. Denote by 
	$$W\colon \tx{Acc}^{\aleph_0}(\A,\V) \hookrightarrow\alpha\tx{-Acc}(\A,\V) \simeq[\A_{\alpha},\V]$$
	the inclusion; this preserves all colimits and all finite limits since these are computed pointwise in $\alpha\tx{-Acc}(\A,\V)$ and filtered colimits commute with all colimits and finite limits in $\V$. In particular $\tx{Acc}^{\aleph_0}(\A,\V)$ is cocomplete and lex. 
	
	Now define $\B:=\tx{Ind}(\A_\alpha)$ to be the free cocompletion of $\A_\alpha$ under flat colimits, so that $\B$ is finitely accessible; then we have induced $\V$-functors $S\colon\A\to\B$, which preserves $\alpha$-flat colimits and extends the inclusion $\A_\alpha\subseteq\B$ to $\A$, and $T\colon\B\to\A$, which preserves flat colimits and extends the inclusion $\A_\alpha\subseteq\A$ to $\B$. By construction they satisfy $TS\cong 1_\A$. Arguing as in the chain of isomorphisms above it follows that a $\V$-functor $F\colon\A\to\V$ preserves flat colimits if and only if $FT$ preserves them. As a consequence we can see $\tx{Acc}^{\aleph_0}(\A,\V)$ as the bipullback below in $\V$-$\bo{CAT}$.
	\begin{center}
		\begin{tikzpicture}[baseline=(current  bounding  box.south), scale=2]
			
			\node (a0) at (0,0.8) {$\tx{Acc}^{\aleph_0}(\A,\V)$};
			\node (b0) at (1.8,0.8) {$\tx{Acc}^{\aleph_0}(\B,\V)$};
			\node (c0) at (0,0) {$\alpha\tx{-Acc}(\A,\V)$};
			\node (d0) at (1.8,0) {$\alpha\tx{-Acc}(\B,\V)$};
			\node (e0) at (0.3,0.55) {$\lrcorner$};
			
			\path[font=\scriptsize]
			
			(a0) edge [->] node [above] {$-\circ T$} (b0)
			(a0) edge [right hook->] node [left] {} (c0)
			(b0) edge [right hook->] node [right] {} (d0)
			(c0) edge [->] node [below] {$-\circ T$} (d0);
		\end{tikzpicture}	
	\end{center}
	Since $\tx{Acc}^{\aleph_0}(\B,\V)\simeq[\A_{\alpha},\V]\simeq\alpha\tx{-Acc}(\A,\V)$ and 
	$\alpha\tx{-Acc}(\B,\V)\simeq[\B_\alpha,\V]$ are locally presentable and the $\V$-functors between them are accessible (by arguments similar to those used above), it follows that $\E:=\tx{Acc}^{\aleph_0}(\A,\V)$ is accessible as well (again by \cite[Theorem~5.5]{LT22:virtual}). Moreover since $\E$ is cocomplete, it is also locally presentable.
	
	Consider then a small dense full subcategory $H\colon\C\hookrightarrow\E$ of $\E$ closed under finite weighted limits (a small dense subcategory exists by local presentability, then take the closure of this in $\E$ under finite limits). Then, the nerve $\V$-functor $$K\colon \E\hookrightarrow[\C\op,\V],$$ 
	defined by $KE=\E(H-,E)$, is fully faithful and, by local presentability of $\E$ and $[\C\op,\V]$, has a left adjoint given by $\tx{Lan}_YH\colon[\C\op,\V]\to\E$. To conclude that $\E$ is a $\V$-topos then it is enough to prove that $\tx{Lan}_YH$ is lex.
	Consider the composite $W(\tx{Lan}_YH)$, which is cocontinuous since $W$ and $\tx{Lan}_YH$ are. Since this also restricts to $WH$, it follows that $$W(\tx{Lan}_YH)\cong \tx{Lan}_Y(WH).$$ 
	But $\tx{Lan}_Y(WH)\colon [\C\op,\V]\to [\A_\alpha,\V]$ is lex by \cite[Proposition~2.4(4)]{GL12:articolo} since $[\A_\alpha,\V]$ is a $\V$-topos and $\V$-topoi satisfy all the (equivalent) conditions of \cite[Proposition~2.4]{GL12:articolo} by \cite[Proposition~2.6]{GL12:articolo}. Thus also $\tx{Lan}_YH$ is lex, because $W$ is fully faithful and preserves finite limits. As a consequence $\E$ is a left exact localization of $[\C\op,\V]$ and hence a $\V$-topos. The fact that $\tx{Acc}^{\aleph_0}(F,\V)$ is lex cocontinuous whenever $F\colon\A\to\B$ is finitary, follows from the fact that finite limits and colimits are computed pointwise in $\tx{Acc}^{\aleph_0}(\A,\V)$ and $\tx{Acc}^{\aleph_0}(\B,\V)$, and pre-composition with $F$ is continuous and cocontinuous in the presheaf $\V$-categories.
\end{proof}

\begin{obs}
	We expect that the duality above could further generalized in two different directions by considering weakly sound classes: \begin{enumerate}[align=parleft,left=0pt]\setlength\itemsep{0.25em}
		\item Given a weakly sound class of weights $\Psi$, replace $\V\tx{-}\bo{Top}$ with the 2-category whose objects are the accessible and left exact localizations of finitely complete $\Psi$-free $\V$-categories; that is, of the form ${\Psi^+}\B$, for some small $\B$. Morphisms between them are lex and ${\Psi^+}$-cocontinuous $\V$-functors, and 2-cells are 2-natural transformations. On the other hand, instead of $\bo{Acc}^{\aleph_0}$, consider the 2-category of accessible $\V$-categories with $\Psi$-limits and flat colimits, $\Psi$-continuous and finitary $\V$-functors, and $\V$-natural transformations. Then one recovers the adjunction of Theorem~\ref{Vscott} above by taking $\Psi=\emptyset$ and ${\Psi^+}=\P$. 
		\item Given a locally small weakly sound class of weights $\Phi$. On one hand, we could replace the $\V$-topoi above with {\em $\Phi$-topoi}: accessible and reflective subcategories of a presheaf $\V$-category with a $\Phi$-continuous left adjoint; morphisms between them are cocontinuous and $\Phi$-continuous $\V$-functors. On the other hand one considers accessible $\V$-categories with $\Phi$-flat colimits and $\V$-functors which preserve them. Then one recovers the adjunction of Theorem~\ref{Vscott} above by taking $\Phi$ to be the class of finite weights.
	\end{enumerate}
\end{obs}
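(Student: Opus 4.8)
The plan is to prove both generalizations by transporting the two-step skeleton of the proof of Theorem~\ref{Vscott}: first verify that the two candidate 2-functors are well defined (the substantive part), then deduce the universal property exactly as there, from the commutativity in $\V$ of the relevant colimits with the relevant limits. The latter argument is insensitive to the decorations and carries over verbatim, so the real work is to locate the correct ``discrete'' input feeding the base case and to check the two preservation statements.

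For generalization (1), fix a companion $\mt C$ for $\Psi$ and choose $\alpha$ so that the $\V$-category at hand is conically $\alpha$-accessible. The discrete input is now the main duality in place of Theorem~\ref{finacc-presh}: given $\A$ conically accessible with $\Psi$-limits, the finite ind-completion $\B:=\tx{Ind}(\A_\alpha)$ is finitely accessible and, since $\A_\alpha\op$ is $\mt C$-virtually complete, carries $\Psi$-limits by Proposition~\ref{C-virtual-flat}; by Theorem~\ref{duality-hom} the $\V$-category $\tx{cAcc}^{\aleph_0}_\Psi(\B,\V)\simeq\mt C(\A_\alpha\op)$ is then a free $\mt C$-exact $\V$-category, playing the role the presheaf category $[\A_\alpha,\V]$ played before. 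For the 2-functor from $\mt C$-topoi, given $\E$ presented as a lex $\mt C$-cocontinuous localization $L\dashv J$ of $\mt C\B$, I would show $\tx{Reg}_\mt C(\E,\V)$ is conically accessible with $\Psi$-limits and filtered colimits by arguing directly, rather than through the bipullback of \emph{locally presentable} corners used in Theorem~\ref{Vscott} (unavailable here since $\mt C\B$ need not be cocomplete). Namely, it is accessible as the category of $\V$-models of a suitable limit/colimit sketch on the accessible $\V$-category $\E$, and it is closed pointwise in $[\E,\V]$ under $\Psi$-limits and filtered colimits because, by compatibility and condition~(a), $\mt C$-colimits commute in $\V$ with both. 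For the reverse 2-functor one checks symmetrically that $\tx{cAcc}^{\aleph_0}_\Psi(\A,\V)$ is accessible (again as a sketch-model category, or via the bipullback of the accessible corners supplied by the main duality) and that, since finite limits, filtered colimits, and $\mt C$-colimits are all computed pointwise in $[\A,\V]$ and the latter commute with the former two, it is finitely complete, accessible, and $\mt C$-cocomplete; that these structural prerequisites force it to be a $\mt C$-topos is the delicate step, deferred below.

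Generalization (2) is lighter, since no companion is present: I would substitute ``$\Phi$-limits'' for finite limits, ``$\Phi$-continuous'' for lex, and ``$\Phi$-flat colimits'' for filtered colimits uniformly in the proof of Theorem~\ref{Vscott}. That proof uses the arithmetic of the base in exactly one place, the commutativity of filtered colimits with finite limits in $\V$; weak soundness of $\Phi$ provides precisely the analogue, commutativity of $\Phi$-flat colimits with $\Phi$-limits in $\V$, which is what the chain of isomorphisms showing that $F$ is $\Phi$-continuous and cocontinuous iff $FL$ is, and the pointwise computation of $\Phi$-flat colimits, both require. The density step, that $\tx{Lan}_YH$ is $\Phi$-continuous, invokes the $\Phi$-version of \cite[Proposition~2.4(4)]{GL12:articolo}. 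Taking $\Phi$ to be the class of finite weights recovers Theorem~\ref{Vscott} in both generalizations, and taking $\Psi=\emptyset$, $\mt C=\P$ does so for the first.

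The main obstacle is the structural step in generalization (1): that a finitely complete, accessible, $\mt C$-cocomplete $\V$-category is a lex $\mt C$-cocontinuous localization of some $\mt C\B'$ — the $\mt C$-decorated analogue of ``a cocomplete, accessible, lex $\V$-category is a $\V$-topos''. Unlike the original situation, where the ambient $\P\C$ is a presheaf category, here $\mt C\B'$ is a proper, generally non-cocomplete subcategory of presheaves (for $\V=\bo{Set}$, $\Psi$ the class for products and $\mt C=\mt R$ it is merely exact with enough projectives), so one cannot route through the ambient $\V$-topos $\tx{cAcc}^{\aleph_0}(\A,\V)$ and must instead produce the $\mt C$-exact structure intrinsically. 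I expect this to follow from the regular/exact localization machinery of \cite{GL12:articolo} for the appropriate class of lex weights, which in particular must supply effectivity of the relevant equivalence structure and not merely the presence of $\mt C$-colimits. Granting this, the displayed special example falls out by identifying, as in Section~\ref{productsagain}, the $\mt R$-based $\V$-categories with the small exact categories with enough projectives and their lex localizations with arbitrary exact categories, so that generalization~(1) specialises to the asserted duality between accessible categories with filtered colimits and products and exact categories.
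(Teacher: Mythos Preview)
The paper does not prove this statement: it is a \emph{Remark}, with no accompanying proof, that merely indicates two directions in which the Scott adjunction of Theorem~\ref{Vscott} could be generalized. There is therefore nothing in the paper to compare your proposal against; you have attempted to supply what the paper only gestures at.

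That said, your outline is broadly sensible. For generalization~(2) the substitution you describe is exactly what one would do, and the only nontrivial ingredient---commutativity of $\Phi$-flat colimits with $\Phi$-limits in $\V$---is precisely the content of weak soundness. For generalization~(1) you correctly isolate the real difficulty: showing that $\tx{cAcc}^{\aleph_0}_\Psi(\A,\V)$ is a lex $\mt C$-cocontinuous localization of some $\mt C\B'$. Your deferral of this to the lex-colimit machinery of \cite{GL12:articolo} is the right instinct, but it is a genuine gap rather than a routine verification, and the paper does not claim to have filled it either. One smaller point: when you invoke Proposition~\ref{C-virtual-flat} and Theorem~\ref{duality-discrete} to conclude that $\A_\alpha\op$ is $\mt C$-virtually $\alpha$-complete, note that those results assume $\A$ is $\alpha$-accessible with $\Psi$-limits, whereas in the setting of the remark $\A$ is only \emph{conically} accessible with $\Psi$-limits and filtered colimits; reconciling this would also need attention.
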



\end{document}